\newtheorem{theorem}{{\sc Theorem}}[section]
\newtheorem{lemma}[theorem]{{\sc Lemma}}
\newtheorem{remark}[theorem]{Remark}
\newtheorem{definition}[theorem]{Definition}
\newtheorem{conjecture}[theorem]{Conjecture}
\newtheorem{notation}[theorem]{Notation}
\newcommand\Restr[2]{{ \left.\kern-\nulldelimiterspace #1 \vphantom{\big|} \right|_{#2}}}
\newcommand{\RR}{\mathbb{R}}
\newcommand{\CC}{\mathbb{C}}
\newcommand{\CB}{\mathcal{B}}
\newcommand{\CM}{\mathcal{M}}
\newcommand{\CJ}{\mathcal{J}}
\newcommand{\n}{\noindent}
\newcommand{\comment}[1]{}
\newcommand{\Ai}{\text{Ai}}
\title{On the distribution of Born transmission eigenvalues in the complex plane}
\author{Narek Hovsepyan\footnote{Department of Mathematics, Rutgers University, New Brunswick, NJ, USA (narek.hovsepyan@rutgers.edu)}}
\date{}
\date{}
\begin{document}
\maketitle

\begin{abstract}
We analyze an approximate interior transmission eigenvalue problem in ${\mathbb R}^d$ for $d=2$ or $d=3$, motivated by the transmission problem of a transformation optics-based cloaking scheme and obtained by replacing the refractive index with its first order approximation, which is an unbounded function. Using the radial symmetry we show the existence of (infinitely many) complex transmission eigenvalues and prove their discreteness. Moreover, it is shown that there exists a horizontal strip in the complex plane around the real axis, that does not contain any transmission eigenvalues. 
\end{abstract}

\section{Introduction}
\setcounter{equation}{0}

Transmission eigenvalue problems have become an important area of research in inverse scattering theory. We refer the reader to \cite{CCH21} for a recent survey article, or to \cite{CCH22, CK19} and the references therein. The relation of transmission eigenvalues to cloaking, or to so-called non-scattering wave numbers is noteworthy \cite{BPS14, EH18, CX21, VX21, CV}. At such wave numbers there exists an incident field which does not scatter, i.e. the obstacle becomes invisible, when probed by that incident field. In \cite{CHV23} we considered an approximate, transformation optics-based cloaking scheme for the Helmholtz equation that incorporated a Drude-Lorentz model \cite{J01, NV16} to account for the dispersive properties of the cloak. Specifically, the goal is to make $B_1$, the ball of radius 1 centered at the origin, approximately invisible to a far observer, independently of its contents. This is done using ideas from transformation optics \cite{GLU03, PSS06, KSVW08, GKLU09, KOV10, phys1, phys2} by surrounding the cloaked region $B_1$ with a layer of an appropriate anisotropic material that we assume occupies the annulus $B_2\backslash B_1$ and refer to it as the cloak, or the cloaking layer. The material properties of the cloak are obtained via change of variables using a map $F_\epsilon$ that blows up a small ball $B_\epsilon$ of radius $\epsilon>0$ to the cloaked region $B_1$, while keeping the outer boundary $\partial B_2$ fixed (this map is commonly used in many cloaking papers, e.g. \cite{KSVW08, KOV10, NV16}, but its precise formula is irrelevant here). Finally, by including a layer of extremely high conductivity adjacent to $B_1$, without loss of generality we assume that $B_1$ is ``soft", i.e. we impose a zero Dirichlet boundary condition on $\partial B_1$. The refractive index of this cloak is

\begin{equation} \label{n_eps}
n_\epsilon(x,k) = 1 + \sigma_\epsilon(k) \det DF_\epsilon(x),
\end{equation}

\n where

\begin{equation*}
\sigma_\epsilon(k) = \frac{1}{k_\epsilon^2 - k^2 - ik}
\qquad \text{and} \qquad
\det DF_\epsilon(x) = \frac{(2-2\epsilon + |x|)^{d-1}}{(2-\epsilon)^d |x|^{d-1}}.
\end{equation*}

\n Here $k$ denotes the wave number, $d=2$ or $d=3$ is the dimension and $k_\epsilon > 0$ represents the so-called resonant frequency of the Drude-Lorentz term. The interior transmission eigenvalue problem corresponding to this cloaking scheme is nonlinear and its complete analysis is an open problem: the existing approaches do not apply \cite{CCH22}. It turns out \cite{CHV23} that there are no real transmission eigenvalues, consequently perfect cloaking/non-scattering is impossible at any $k>0$. However, one can achieve approximate cloaking: under certain growth assumption on $k_\epsilon$, the scattered field of the cloak and its far field pattern can be made uniformly small (of order $\epsilon$ in 3d and $1/|\ln \epsilon|$ in 2d) on any finite band of wave numbers and any incident field, provided $\epsilon$ is sufficiently small. Consistent with this growth assumption we will suppose that 

\begin{equation*}
k_\epsilon^2 \sim \epsilon^{-d}, \qquad \qquad \epsilon \to 0.
\end{equation*}

\n Interestingly, transmission eigenvalues exhibit non-discreteness: supported with numerical evidence, we conjectured that sequences of complex transmission eigenvalues accumulate at two finite points in the complex plane, namely the poles of the Drude-Lorentz term $\sigma_\epsilon(k)$. 

With the motivation to have a better insight into the structure of this nonlinear transmission eigenvalue problem, in this paper we study a related Born transmission eigenvalue (BTE) problem. To be more precise, we note that for any fixed $k$ one can expand

\begin{equation} \label{n_eps expansion}
n_\epsilon(x,k) = 1 + \epsilon^d \left[ m(x) +o(1) \right],  
\end{equation}

\n where the error term $o(1)$ converges to zero, as $\epsilon \to 0$ pointwise in $x\neq 0$ (as well as in $L^p(B_2)$ for suitable $p$), and $m$ is given by

\begin{equation} \label{m}
m(x) = m(|x|) = \frac{(2+|x|)^{d-1}}{2^d |x|^{d-1}}.
\end{equation}

\n Broadly speaking, in the weak scattering regime, or when the contrast of an obstacle is small, i.e. $n(x) = 1 + \epsilon m(x)$, where $n$ denotes the refractive index of the scattering obstacle and $\epsilon \ll 1$ is a small parameter, the Born approximation to the scattered field $u^s$ is given by $\partial_\epsilon u^s |_{\epsilon=0}$ (also, the Born approximation replaces the far filed operator by an operator that depends linearly on $m$, cf. \cite{CPS07}). For a background on the Born approximation and inverse scattering theory in the Born regime we refer to \cite{mosks08, KKMS09, KR14, CCR16, kir17}. From the cloaking perspective, existence of real BTEs would imply better or higher order invisibility results at those wave numbers. Also, given the non-discreteness phenomenon in the interior transmission eigenvalue problem, one can wonder about non-discreteness in the corresponding Born problem. We address these questions and answer them negatively for the function $m$ given by \eqref{m}. We mention that it is a common assumption in the literature for the refractive index to be a bounded function, however, here $m \notin L^\infty(B_2)$. 

The fact that there are no real BTEs is a well-known consequence of positivity of $m$ \cite{CPS07, CCR16}. The existence of countably many BTEs is obtained by making use of the radial geometry and following \cite{CCR16}: separating the variables, BTEs are described as zeros of a family of analytic functions. Another important, but harder question is about the location of the transmission eigenvalues in the complex plane. Specifically, it is useful to obtain information about eigenvalue free regions. The so-called linear sampling method \cite{CK96, CC06, CMS21} provides a technique to obtain qualitative information about the location and the shape of the scattering obstacle, from the scattered far field data. For justification of the linear sampling method in the time domain, the questions of discreteness and location of eigenvalues (relative to the real axis) are essential. With this in mind, we show that BTEs stay away from the real axis, i.e. there exists a strip in the complex plane, parallel to and containing the real axis, which does not contain any Born transmission eigenvalues. This will allow one to use the Fourier-Laplace transform and consider the time-domain linear sampling method \cite{CMS21}. Moreover, we also conjecture that any strip parallel to the real axis contains at most finitely many BTEs. For simplicity of presentation we carry out the proof for the function $m$ given by \eqref{m}. However, our methods apply to general positive and radial functions $m$ for which $t^{d-1} m(t)$  extends to a holomorphic function around the interval $[0,2]$, where $t$ denotes the radial variable (cf. Remark~\ref{REM m properties}). 

Questions regarding the distribution of transmission eigenvalues in the complex plane, such as eigenvalue free regions were studied in \cite{LC12, CL13, CLM15, vod15, vod16, PG17}. However, there are not many works in this direction for BTEs. For example, in \cite{CCR16} among other things, the authors study the distribution of those BTEs that correspond to radially symmetric eigenfunctions. One of the consequences of \cite{PG17} is that (for the ball with constant refractive index) all transmission eigenvalues lie in a horizontal strip containing the real axis. Our work exhibits qualitative difference between the distributions of transmission eigenvalues (TEs) and those in the Born regime (BTEs), it shows somewhat contrary, or dual behavior of BTEs, namely that all of them lie outside of some horizontal strip around the real axis. Or, invoking the conjectured stronger statement, all (but possibly finitely many) BTEs lie outside of any horizontal strip. Our proof uses Olver's uniform asymptotic expansion of Bessel functions of large order in the complex plane in terms of Airy functions \cite{olver54, olverTHMB54, olver74}. We also use contour deformations and Mellin transform techniques \cite{wong01}.

\section{The main result} \label{SECT main result}
\setcounter{equation}{0}

The transmission eigenvalue (TE) problem corresponding to the cloaking scheme described in the introduction is anisotropic, however changing the variables in the transmitted field inside the cloak using the map $F_\epsilon$ and leaving the incident field unchanged, one can get rid of the anisotopy. As a result we arrive at the following equivalent TE problem in $\RR^d$ for $d=2$ or $d=3$ (cf. (4.1) in \cite{CHV23}):

\begin{equation} \label{trans}
\begin{cases}
\Delta u  + k^2 n_\epsilon(\cdot, k) u = 0, \qquad \qquad &\text{in} \ B_2 \backslash \overline{B}_\epsilon
\\
\Delta v + k^2 v = 0, &\text{in} \ B_2
\\
u = v,  &\text{on} \ \partial B_2
\\
\partial_\nu u = \partial_\nu v &\text{on} \ \partial B_2
\\
u = 0, &\text{on} \ \partial B_\epsilon,
\end{cases}
\end{equation}

\n where $n_\epsilon$ is given by \eqref{n_eps}. Assume the function $u$ from \eqref{trans} can be approximated by $u \approx u_0 + \epsilon^d u_1$, where $u_0, u_1$ are defined in $B_2$. When $v$ is of the same order as $u_0$, for $u_1$ we consider the following approximation (for scattering estimates from a small circular obstacle we refer to, e.g. \cite{HPV07})

\begin{equation} \label{trans w1}
\begin{cases}
\Delta u  + k^2 u = -k^2 m v, \qquad \qquad &\text{in} \ B_2
\\
\Delta v + k^2 v = 0, &\text{in} \ B_2
\\
u = \partial_\nu u = 0,  &\text{on} \ \partial B_2,
\end{cases}
\end{equation}

\n where $m$ is given by \eqref{m} and with abuse of notation we dropped the subscript from $u_1$ and still called this function $u$.

\begin{definition} \label{DEF}
\normalfont
$k \in \mathbb{C} \backslash \{0\}$ is called a Born transmission eigenvalue (BTE) if there exists a nontrivial solution $u,v$ to \eqref{trans w1}, such that $u \in H^2$ and $m v \in L^2$, where we suppress the domain $B_2$ from the notation of the function spaces.
\end{definition}

Some remarks are now in order:

\begin{remark} \mbox{} \label{REM discussion}

$\bullet$ Note that when we formally take limits in \eqref{trans}, the equation for $u_1$ holds in the punctured ball $B_2 \backslash \{0\}$. Apriori $u_1$ may have a singularity at the origin and it might be interesting to consider families of interior transmission problems where the equation for $u_1$ holds in the punctured ball and has a prescribed singularity at the origin. This work is not concerned with the mathematical justification of the limiting procedure leading to \eqref{trans w1}. We assume that $u_1$ is regular at the origin and its equation is satisfied in $B_2$. 

\vspace{.05in}

$\bullet$ Due to the singularity of $m$ there are no eigenvalues of \eqref{trans w1} in the radially symmetric case. Indeed, say $d=2$ and $v$ is radial, then up to a multiplicative constant $v(x) = J_0(k |x|)$, where $J_0$ is the Bessel function of order 0. As $J_0$ is regular near the origin we see that $mv \notin L^2$ due to the singularity of $m$. However, note that $\sqrt{m} v \in L^2$. Therefore, this issue can be remedied by requiring $u, v$ to lie in the weighted spaces $H^2_{0, \frac{1}{m}}$ and $L^2_m$, respectively (see also Remark~\ref{REM B_0} in Section~\ref{Discreteness}). Where the notation means that $\sqrt{m} v \in L^2$ and $\frac{1}{\sqrt{m}} \partial^\alpha u \in L^2$ for any multiindex $|\alpha| \leq 2$. The distribution of BTEs in the radially symmetric case was considered in \cite{CCR16} (for nonsingular $m$) and in this case BTEs are zeros of a single entire function. In this work BTEs are zeros of a countable family of entire functions and consequently analyzing their distribution in the complex plane is much more technical.  

\vspace{.05in}

$\bullet$ It would be interesting to consider transmission eigenvalue problems like \eqref{trans w1} in general domains with general singular weights $m$ and analyze their solutions in the weighted Sobolev spaces mentioned above. This will be a task for the future.

\end{remark}

The goal of this paper is to analyze the Born transmission eigenvalue problem \eqref{trans w1}. We point out that our core contribution is establishing part $(iv)$ of the next theorem.

\vspace{.05in}

\begin{theorem} \label{THM main}
Let $d=2$ or $d=3$, and $m$ be given by \eqref{m}. The following are true for the Born transmission eigenvalue problem \eqref{trans w1}:

\begin{enumerate}
\item[(i)] There are no real, or purely imaginary BTEs.

\item[(ii)] There are infinitely many BTEs in $\mathbb{C} \backslash (\RR \cup i\RR)$.

\item[(iii)] BTEs form a discrete set in $\mathbb{C}$ (i.e. a countable set with no limit points in $\mathbb{C}$).

\item[(iv)] There exists $c>0$, such that there are no BTEs in the strip $\{k\in \mathbb{C} : |\Im k| < c\}$.
\end{enumerate}

\end{theorem}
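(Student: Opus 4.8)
Since $m$ is radial, I would first reduce the PDE problem \eqref{trans w1} to a family of ODE problems indexed by the spherical harmonic degree $\ell = 0, 1, 2, \dots$ (in 3d, with analogous integer Fourier modes in 2d). Writing $v$ in terms of $j_\ell(kr)$ (the spherical Bessel function) and solving the inhomogeneous Helmholtz equation for $u$ with a variation-of-parameters formula, the boundary conditions $u = \partial_\nu u = 0$ on $\partial B_2$ become two linear equations; a BTE in that mode is a zero of the associated determinant, which is an entire function $d_\ell(k)$ of $k$ (even in $k$, and with a factor $k^p$ removed). The content of part (iv) is then: \emph{there is a single strip $|\Im k| < c$, uniform in $\ell$, containing no zero of any $d_\ell$.} So the plan splits into a small-$\ell$ regime (finitely many entire functions, each with zeros bounded away from $\RR$ because by (i) none of them vanishes on $\RR\cup i\RR$ and by a compactness/normal-families argument their zeros cannot approach the real axis) and — the crux — a large-$\ell$ regime requiring a \emph{uniform} eigenvalue-free strip.

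**The large-order asymptotics.**

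For the large-$\ell$ regime I would rescale: write $k = \nu z$ with $\nu = \ell + \tfrac12$ (or $\ell$ in 2d) and $z$ in a fixed complex neighborhood of the real axis, and feed Olver's uniform asymptotic expansions of $J_\nu(\nu z)$, $Y_\nu(\nu z)$ (equivalently the spherical Bessel functions) in terms of Airy functions $\Ai(\nu^{2/3}\zeta)$, $\Bi(\nu^{2/3}\zeta)$, where $\zeta = \zeta(z)$ is Olver's conformal variable with $\zeta(1) = 0$. The determinant $d_\ell(k)$, after substituting these expansions and the variation-of-parameters kernel built from them (the source term $-k^2 m v$ integrated against the Green's kernel on $[0,2]$), becomes $\nu^{-N}$ times an explicit leading term plus controlled error. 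The key point is that the leading term factors into pieces that are \emph{non-vanishing} for $z$ real and positive and small imaginary part — because the relevant Airy combinations and the integral $\int_0^2 t^{d-1} m(t)\,(\cdots)\,dt = \int_0^2 (2+t)^{d-1}(\cdots)/2^d\,dt$ (where $t^{d-1}m(t)$ is the holomorphic function alluded to in the text) do not vanish there. Hence for $\nu$ large the error cannot overcome the leading term, so $d_\ell$ has no zero with $|\Im k| < c\nu$ for some fixed $c$ — in particular none with $|\Im k| < c$.

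**Handling the singular source and the Mellin transform.**

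The genuinely delicate technical point — and the one I expect to be the main obstacle — is that $m(t) \sim t^{-(d-1)}$ near $t = 0$, so the inhomogeneous term and its interaction with the Bessel solutions near the origin are singular; the variation-of-parameters integral $\int_0^2$ has an integrable but non-smooth endpoint at $t = 0$. Near $t=0$ the relevant combination $t^{d-1} m(t) j_\ell(kt)^2$ (roughly) must be analyzed, and its contribution to $d_\ell(k)$ — especially the $\ell$-dependence of the small-$t$ part — is exactly where Olver's outer (large-$z$, i.e. here small-argument relative to $\nu$) expansions degrade. This is where the Mellin transform enters: I would split the integral at a fixed point, handle $[\delta, 2]$ by the oscillatory/exponential Olver estimates, and on $[0,\delta]$ use the Mellin transform (following Wong) to extract the precise asymptotic order in $\nu$ of the endpoint contribution and show it is lower order than (or has the right sign not to cancel) the leading term. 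Assembling the small-$t$ estimate, the bulk Airy estimate, and the finitely-many-modes compactness argument into one $c > 0$ valid for all $\ell$ simultaneously is the real work; the rest is bookkeeping with Olver's error bounds, which are uniform in the sectors we need.
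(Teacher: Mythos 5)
Your high-level roadmap shares the paper's ingredients (separation of variables to reduce to a countable family of entire functions, Olver's uniform Airy asymptotics, and a Mellin-transform step), but several of the load-bearing details are misplaced or missing.

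First, the reduction is simpler and sharper than you anticipate. Instead of a general variation-of-parameters determinant with $J_\nu$ and $Y_\nu$ kernels, the paper (Lemma~\ref{LEM B_n intrep}) shows that BTEs are precisely the nonzero zeros of
$\CB_n(k)=\int_0^1 f(t)\,J_n^2(kt)\,dt$ (with $j_n$ in place of $J_n$ for $d=3$), where $f(t)=4t\,m(2t)=t+1$ in 2d and $8t^2m(2t)=(t+1)^2$ in 3d. You correctly observe that $t^{d-1}m(t)$ is holomorphic and nonvanishing near $t=0$, but you then also argue that the singularity of $m$ at the origin is the technical obstacle and plan to deploy the Mellin transform to tame the endpoint $t=0$. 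This is not where the difficulty lives: the factor $t^{d-1}$ from the volume element exactly cancels the blow-up of $m$, so $f$ is a polynomial and $t=0$ is entirely benign (indeed, for small $z=kt/\nu$ the Olver expansion \eqref{Bessel unif z<1} yields exponential decay $e^{-2\nu\alpha(z)}$, so the small-$t$ contribution is negligible). The genuinely delicate region is the turning point $z\approx1$, i.e.\ $t\approx\nu_j/|k_j|$, where neither the sub- nor super-unit Olver asymptotics is usable. The paper's Mellin transform (Lemma~\ref{LEM I3}) is applied to exactly this interior piece of the integral, written over a fixed real interval $[1-\epsilon,1+\epsilon]$ after a contour deformation, not to a neighborhood of $t=0$.

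Second, your rescaling $k=\nu z$ with $z$ in a fixed neighborhood implicitly pins you to the regime $|k_j|\simeq n_j$. But along a hypothetical sequence of BTEs with $\Im k_j\to0$ one has no a priori control of the ratio $|k_j|/n_j$; the paper must treat three separate regimes $n_j\gg|k_j|$, $n_j\ll|k_j|$, $n_j\simeq|k_j|$ (and within the last one, subcases $L<1$, $L>1$, $L=1$, with $L=1$ further split according to the behavior of $R_j=n_j^{2/3}(1-\Re k_j/n_j)$). The explicit leading behaviors are qualitatively different in each regime — a positive integral of $t^{2n_j}e^{-2n_j\Re\tilde\alpha}$, a logarithmic blow-up $\ln(k_j/n_j)/(\pi k_j)$, an integral of $\Ai^2$, etc. — so the vague statement that ``the leading term factors into non-vanishing pieces'' overlooks the fact that $\cos^2(\nu\beta-\pi/4)$ and $\Ai$ certainly do vanish pointwise, and positivity has to be recovered only after integration, with regime-specific arguments.

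Third, for the finitely-many-modes part your ``compactness/normal-families'' argument is not enough. Knowing that $\CB_n$ has no zeros on $\RR\cup i\RR$ does not preclude a sequence of zeros with $\Re k_j\to\infty$ and $\Im k_j\to0$; one needs a quantitative large-$|k|$ estimate for fixed $n$. The paper proves this as Lemma~\ref{LEM B_n properties}(iv): for fixed $n$, $\liminf|k\,\CB_n(k)|>0$ as $\Re k\to\infty$ in any horizontal strip, using the $|z|\to\infty$ Bessel expansion and an integration by parts on the oscillatory piece. That explicit growth/decay bookkeeping is what actually rules out low-lying zeros for each fixed $n$.

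In summary: the skeleton (separation of variables, Olver, Mellin) is correct, but you place the hard analysis at the wrong endpoint, collapse three regimes into one, and treat the fixed-$n$ case too casually. Filling these in is essentially the entire content of Sections~3--4 and the Appendix.
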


\begin{conjecture}
For any $C>0$, the strip $\{k\in \mathbb{C} : |\Im k| < C\}$ contains at most finitely many BTEs.
\end{conjecture}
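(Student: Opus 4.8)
The plan is to build on the separation of variables behind parts (ii)--(iv) of Theorem~\ref{THM main}. Writing $\nu=\ell$ for $d=2$ and $\nu=\ell+\tfrac12$ for $d=3$, the radial reduction shows that $k$ is a BTE precisely when
\begin{equation*}
d_\ell(k):=\int_0^2\tilde m(s)\,J_\nu(ks)^2\,ds=0\quad\text{for some }\ell\ge 1,\qquad \tilde m(s):=\frac{(2+s)^{d-1}}{2^d\,s^{d-2}},
\end{equation*}
where $\tilde m>0$ is real and smooth on $(0,2]$ and each $d_\ell$ is entire, even in $k$, and $\not\equiv 0$ (e.g.\ $d_\ell>0$ for small $k>0$). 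Since \eqref{trans w1} depends only on $k^2$ and has real coefficients, BTEs are symmetric under $k\mapsto-k$ and $k\mapsto\bar k$, so it suffices to exclude zeros with $\Re k\ge0$, $0\le\Im k<C$ and $|k|$ large. I would prove two statements: \textbf{(A)} for each fixed $\ell$, $d_\ell(k)\ne0$ once $|k|$ is large (depending on $\ell$) in the strip; and \textbf{(B)} there is $\ell_1=\ell_1(C)$ with $d_\ell(k)\ne0$ for every $\ell\ge\ell_1$ and every $k$ in the strip. By part (iii), (A) leaves finitely many zeros of each $d_\ell$ in the strip and (B) discards all but the modes $\ell<\ell_1$, so the strip contains finitely many BTEs.

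Statement (A) is routine. For fixed $\ell$ and $|k|\to\infty$ one substitutes $w=ks$ and uses the large‑argument (Hankel) expansion of $J_\nu$, uniform for $s$ bounded away from $0$, together with $J_\nu(ks)^2=O\big((|k|s)^{2\nu}\big)$ near $s=0$, which absorbs the $s^{-(d-2)}$ factor in $\tilde m$. This yields $d_\ell(k)\sim\frac{\ln|k|}{2\pi k}$ for $d=2$ and $d_\ell(k)\sim c_\ell/k$ with $c_\ell\ne0$ for $d=3$; in the strip the oscillatory remainders are $O(e^{4C})$ times lower powers of $k$, hence subdominant, so $d_\ell(k)\ne0$ for $|k|$ large.

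Statement (B) is the heart of the matter, and I would argue by contradiction and compactness. If the strip held infinitely many BTEs, then by (A) and part (iii) we could extract $k_n,\ell_n$ with $d_{\ell_n}(k_n)=0$, $\Re k_n\ge0$, $0\le\Im k_n<C$, $|k_n|\to\infty$, $\ell_n\to\infty$ (whence $\Re(k_n^2)>0$ eventually), and, along a subsequence, $\rho_n:=|k_n|/\nu_n\to\rho_\infty\in[0,\infty]$. Using Olver's uniform Airy‑type expansion \cite{olver74,olver54} of $J_{\nu_n}(k_ns)$ in terms of $\Ai\big(\nu_n^{2/3}\zeta(k_ns/\nu_n)\big)$, I would analyze $d_{\ell_n}(k_n)$ according to where $2k_n/\nu_n$ sits relative to the turning point $z=1$. \emph{If $\rho_\infty<\tfrac12$}, then $k_ns/\nu_n$ stays in the exponentially small sector for all $s\in(0,2]$, $|J_{\nu_n}(k_ns)|^2$ is exponentially (in $\nu_n$) peaked at $s=2$, and although $\arg J_{\nu_n}(k_ns)^2$ is monotone in $s$ with total variation $O(\Im(k_n^2)/\nu_n)=O(C)$, it varies by only $O(C/\nu_n)=o(1)$ across the $O(1/\nu_n)$‑wide window carrying the integral; a Watson--Laplace localization (cf.\ \cite{wong01}), together with $\tilde m>0$ real, then gives $d_{\ell_n}(k_n)\sim\tilde m(2)\,e^{2i\arg J_{\nu_n}(2k_n)}\cdot(\text{positive})\ne0$. \emph{If $\tfrac12<\rho_\infty\le\infty$}, I would write $J_{\nu_n}^2=\tfrac12H^{(1)}_{\nu_n}H^{(2)}_{\nu_n}+(\text{oscillatory})$: on the macroscopic past‑turning‑point part of $(0,2]$ the non‑oscillatory factor $H^{(1)}_{\nu_n}(k_ns)H^{(2)}_{\nu_n}(k_ns)\sim\frac{2}{\pi k_ns}$ has argument $-\arg k_n+O(1/\nu_n)$, essentially constant in $s$, so integrating against the real positive weight $\tilde m$ produces a leading term $\asymp\frac1{|k_n|}$ (with real coefficient bounded away from $0$; for $\rho_\infty=\infty$ one instead argues as in (A)), while the oscillatory part, the sub‑turning‑point part, and the $O(\nu_n^{-4/3})$ turning‑point window are lower order; hence $d_{\ell_n}(k_n)\ne0$. \emph{If $2k_n-\nu_n=O(\nu_n^{1/3})$} (the genuine turning‑point regime), then $\nu_n^{2/3}\zeta(2k_n/\nu_n)=O(1)$ and, crucially, is asymptotically real since $\Im(2k_n-\nu_n)=2\Im k_n$ is bounded while $\nu_n^{1/3}\to\infty$; the integral localizes near $s=2$ and, after rescaling, converges to a positive multiple of $\int\Ai(t)^2\,w(t)\,dt$ along a half‑line, with $w>0$ built from $\tilde m(2)$ and the local geometry of $\zeta$, hence is nonzero. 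Each alternative contradicts $d_{\ell_n}(k_n)=0$, proving (B).

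The main obstacle is the turning‑point regime and, more seriously, the transitional ranges $\nu_n^{1/3}\ll|2k_n-\nu_n|\ll\nu_n$ interpolating it with the other cases. There one must make Olver's expansion, with its error terms, uniform in $n$ across all of $(0,2]$; evaluate $\tilde m$ and the conformal map $\zeta$ at arguments that are only asymptotically, not exactly, real; and patch the exponentially small, Airy, and oscillatory regimes so that the claimed leading term---a positive multiple of an integral of $\Ai^2$---genuinely dominates, including where the past‑turning‑point window degenerates. The difficulty is structural: in the strip $|\Im k|<C$ with $C$ large, the phase of $J_\nu(ks)^2$ can wind by an amount comparable to $C$ rather than $o(1)$, so non‑vanishing cannot be read off any cheap almost‑constant‑phase estimate; this is what forces the delicate Airy analysis, and is why we state the result only as a conjecture.
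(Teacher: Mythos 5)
You should first note that the paper does not prove this statement at all: it is stated as an open conjecture, and the paper only establishes the weaker Theorem~\ref{THM main}(iv) (existence of \emph{some} eigenvalue-free strip), by a contradiction argument in which $\Im k_j\to 0$ is used throughout Section~\ref{SECT Born and R}. So there is no proof to compare against; what you have written is a program, not a proof, and you concede as much in your last paragraph. Within that program, the logical skeleton is sound: (A) $+$ (B) $+$ discreteness does give finiteness in the strip, your reduction to $d_\ell$ is (up to the rescaling $s=2t$) exactly Lemma~\ref{LEM B_n intrep}, and statement (A) is in fact already proved in the paper for \emph{arbitrary} horizontal strips --- Lemma~\ref{LEM B_n properties}(iv) and its Appendix proof show $|k\,\CB_n(k)|\gtrsim \ln \Re k$ for $d=2$ (bounded below for $d=3$) in any strip, so you could simply cite it. Statement (B) is precisely the extension of the paper's proof of part (iv) from $\Im k_j\to 0$ to $|\Im k_j|\le C$, with the same trichotomy $n\gg|k|$, $n\ll|k|$, $n\simeq|k|$ and the same Olver/Airy machinery.

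The genuine gap is therefore concentrated in (B), and it sits exactly where you wave your hands. When $|\Im k|$ is only bounded by a possibly large $C$, the quantities the paper sends to zero (e.g.\ $n_j|\Im\widetilde{\alpha}(z_j(t))|$, hence $e^{-2in_j\Im\widetilde{\alpha}}\to 1$) are merely bounded, the oscillatory terms carry factors $e^{2\nu|\Im\beta(z)|}=e^{O(C)}$, and nonvanishing can no longer be read off a positivity statement: your ``phase is almost constant over the $O(1/\nu)$ window,'' ``$H^{(1)}_\nu H^{(2)}_\nu$ has essentially constant argument,'' and ``$\nu^{2/3}\zeta$ is asymptotically real'' claims are assertions requiring quantitative localization estimates uniform in the mode index, and none is supplied. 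Moreover, the transitional ranges $\nu^{1/3}\ll|2k-\nu|\ll\nu$ between the Airy window and the oscillatory/exponential regimes are covered neither by Lemma~\ref{LEM Bessel unif} (which requires $|z|$ bounded away from $1$) nor by the turning-point scaling in Section~\ref{SECT Case 3b}, and your outline explicitly leaves them open; the case distinction by $\rho_\infty\lessgtr\tfrac12$ also silently assumes these regimes can be patched together with uniform error control. As it stands, the proposal is a reasonable roadmap that mirrors and extrapolates the paper's part-(iv) argument, but the decisive estimates for large $C$ are missing, and the statement remains a conjecture.
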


\begin{remark} \label{REM m properties}
Our methods of analysis for part $(iv)$ apply to any positive and radially symmetric function $m=m(t)$, where $t = |x|$ denotes the radial variable, for which $t^{d-1} m(t)$ extends to a holomorphic function in some open set containing the interval $[0,2]$. In particular (also in view of Remark~\ref{REM B_0} in Section~\ref{Discreteness}), our approach can be adapted to classical Born transmission eigenvalue problems \eqref{trans w1} with non-singular (bounded) $m$.
\end{remark}

The starting point in proving the above theorem is to use the radial symmetry and separation of variables, as in \cite{CCR16} to characterize BTEs as zeros of a countable family of entire functions $\{\CB_n(k)\}_{n=1}^\infty$. These functions are defined as integrals of the Bessel function (or its spherical counterpart for $d=3$) of the first kind and order $n$, against an explicit function involving $m$ (cf. Lemma~\ref{LEM B_n intrep}). The first three items of Theorem~\ref{THM main} are proved using basic asymptotic approximations of Bessel functions. Item $(iv)$ is significantly harder to prove and relies on Olver's asymptotic formulas that exhibit the fine behavior of the Bessel functions. Let us now outline the main idea behind proving the item $(iv)$. We proceed by contradiction and assume that there exists a sequence $\{k_j\}_{j=1}^\infty \subset \mathbb{C}$ of distinct BTEs approaching the real line, i.e. $\Im k_j \to 0$, as $j \to \infty$. Clearly, $\CB_{n_j}(k_j) = 0$ for some integer $n_j$ and all $j$. Using standard approximations and estimates of Bessel functions we first show that for each fixed $n$, the function $\CB_n(k)$ cannot have a sequence of distinct zeros lying in any horizontal strip in the complex plane. This allows us to conclude that $n_j \to \infty$. The discreteness of BTEs (part $(iii)$ of Theorem~\ref{THM main}), on the other hand implies that $\Re k_j \to \infty$. We then find an explicit expression $E(n_j, k_j)$ such that, upon passing to a subsequence if necessary, which we do not relabel,

\begin{equation} \label{EB ineq}
\left| E(n_j, k_j) \CB_{n_j}(k_j) \right| \geq C
\end{equation}

\n for some $C>0$ and all $j$ large enough. This provides the desired contradiction. The behavior of $\CB_{n_j}(k_j)$, or the explicit expression of $E(n_j, k_j)$ depends on the growth of $\{n_j\}$ relative to $\{k_j\}$. There are three different regimes, depending on whether one of these sequences grows at a rate comparable to, much faster or slower than the other one. The most delicate regime is arguably when the two sequences grow at a comparable rate. Special attention is given to the case where $k_j/ n_j \to 1$. This delicate behavior is due to the so-called turning point of the Bessel's differential equation \cite{olver74} at the point 1. 

\section{Discreteness and existence of the Born transmission eigenvalues} \label{Discreteness}
\setcounter{equation}{0}

Here we prove items $(i)-(iii)$ of Theorem~\ref{THM main}. We start by describing BTEs as zeros of a family of entire functions. This can be achieved by using the radial symmetry and separating the variables as is done in Theorem 1 of \cite{CCR16}. We omit the details and just state the result, as the proof is completely analogous. Throughout this paper, $J_n$ and $j_n$ denote the Bessel and spherical Bessel functions, respectively, of the first kind and order $n$. Recall that

\begin{equation} \label{j_n via J_n}
j_n(z) = \sqrt{\frac{\pi}{2z}} J_{n+\frac{1}{2}}(z).
\end{equation}

\begin{lemma} \label{LEM B_n intrep}
For $n=1,2...$ introduce the functions

\begin{equation} \label{B_n}
\CB_n(k) = 
\begin{cases}
\displaystyle \int_0^1 f(t) J_n^2(k t) d t, \qquad  & $d=2$
\\[.2in]
\displaystyle \int_0^1 f(t) j_n^2(k t) d t, & $d=3$,
\end{cases}
\end{equation}

\n where

\begin{equation} \label{f}
f(t) = 
\begin{cases}
\displaystyle 4t m(2t) , \qquad  & $d=2$
\\[.2in]
\displaystyle 8t^2 m(2t), & $d=3$
\end{cases}
\quad =
\begin{cases}
\displaystyle t + 1 , \qquad  & $d=2$
\\[.2in]
\displaystyle (t+1)^2, & $d=3$.
\end{cases}
\end{equation}

\n The set of zeros

\begin{equation*}
\left\{ k \in \mathbb{C} \backslash \{0\}: \CB_n(k) = 0 \ \text{for some} \ n \right\}
\end{equation*}

\n coincides with the set of BTEs for \eqref{trans w1}.

\end{lemma}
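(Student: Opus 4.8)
The plan is to perform the separation of variables referenced before the statement and reduce \eqref{trans w1}, one angular mode at a time, to a scalar condition that turns out to be exactly $\CB_n(k)=0$. Writing $x=r\omega$ with $r=|x|$ and expanding $u,v$ in the angular harmonics on the sphere (the functions $e^{in\theta}$ when $d=2$, the spherical harmonics $Y_n^\ell$ when $d=3$), the radial profiles $u_n,v_n$ of the $n$-th mode satisfy, on $(0,2)$, a pair of Bessel-type ODEs: $v_n$ solves the homogeneous equation, while $u_n$ solves $\mathcal{L}_n u_n=-k^2 m(r)v_n$ with the same radial operator $\mathcal{L}_n$. The admissibility requirements translate modewise as follows: $v_n$ must be the solution regular at the origin, i.e. $v_n=\alpha_n J_n(kr)$ for $d=2$ and $v_n=\alpha_n j_n(kr)$ for $d=3$; $u_n$ must likewise have no singular component at the origin; and $u_n(2)=u_n'(2)=0$. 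Here one uses that $m(r)r^{d-1}$ is bounded near $0$ (it has a finite limit), so that $mv\in L^2(B_2)$ and $u\in H^2_0(B_2)$ are exactly the conditions just listed; in particular the $n=0$ mode forces $\alpha_0=0$ because $m\,j_0(k\cdot)$ (resp. $m\,J_0(k\cdot)$) fails to be square integrable at the origin, which is why the family $\{\CB_n\}$ is effectively indexed by $n\ge 1$.

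Fix $n\ge1$ and $k\neq 0$. Using variation of parameters with the fundamental pair $(J_n(kr),Y_n(kr))$ (resp. $(j_n(kr),y_n(kr))$) one constructs a particular solution $u_n^{p}$ of $\mathcal{L}_n u_n^{p}=-k^2 m v_n$ that is regular at the origin and depends linearly on $\alpha_n$; the general admissible radial profile of $u$ is then $u_n=\beta_n J_n(kr)+u_n^{p}$ (resp. with $j_n$), carrying the two free constants $\alpha_n,\beta_n$. Imposing $u_n(2)=u_n'(2)=0$ produces a $2\times2$ homogeneous linear system for $(\alpha_n,\beta_n)$, and since $m>0$ a nontrivial pair $(u,v)$ in this mode exists precisely when its determinant vanishes. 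When differentiating $u_n^{p}$ at $r=2$ the boundary term from the variation-of-parameters integral vanishes (the Green kernel is zero on the diagonal), so $u_n^{p}(2)$ and ${u_n^{p}}'(2)$ are expressed through the two integrals $\int_0^2 m(r)J_n(kr)^2 r\,dr$ and $\int_0^2 m(r)J_n(kr)Y_n(kr)r\,dr$ (resp. the spherical analogues with weight $r^2$).

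Expanding the $2\times2$ determinant and invoking the Wronskian identity $J_n(z)Y_n'(z)-J_n'(z)Y_n(z)=\tfrac{2}{\pi z}$ (resp. $j_n(z)y_n'(z)-j_n'(z)y_n(z)=z^{-2}$), the term carrying the cross integral $\int m\,J_nY_n$ drops out and the determinant equals a nonvanishing explicit factor times $\int_0^2 m(r)J_n(kr)^2 r^{d-1}\,dr$. Substituting the radial variable $t$ of \eqref{B_n} and recalling from \eqref{f} that $f(t)=4t\,m(2t)$ (resp. $8t^2 m(2t)$) identifies this integral with $\CB_n(k)$ up to a nonzero constant. Hence the $n$-th mode contributes a nontrivial solution exactly when $\CB_n(k)=0$, and since a pair $(u,v)$ is nontrivial iff at least one of its modes is, the set of BTEs coincides with $\{k\neq 0:\CB_n(k)=0\ \text{for some }n\}$; as in Theorem~1 of \cite{CCR16} the $\CB_n$ are readily seen to be entire. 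I expect the only real subtlety --- hence the main obstacle --- to be the functional-analytic bookkeeping that makes the modewise reduction rigorously equivalent to \eqref{trans w1} despite $m\notin L^\infty$ near the origin (verifying the admissibility conditions termwise and for the assembled superposition, and the exclusion of $n=0$); the determinant algebra itself is routine once the variation-of-parameters formula is in place.
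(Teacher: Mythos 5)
Your separation-of-variables reduction (radial ODEs, variation of parameters with $(J_n,Y_n)$ or $(j_n,y_n)$, vanishing diagonal of the Green kernel, cross-term cancellation via the Wronskian identity, and the resulting determinant proportional to $\int_0^2 m(r)J_n(kr)^2 r^{d-1}\,dr$) is precisely the argument the paper invokes by citing Theorem 1 of \cite{CCR16}, and your explanation for discarding the $n=0$ mode (failure of $mv\in L^2(B_2)$ at the origin, matching the indexing $n\ge 1$ in the lemma) is correct. One small bookkeeping point worth reconciling: the substitution $r=2t$ in $\int_0^2 m(r)J_n(kr)^2 r^{d-1}\,dr$ yields $\int_0^1 f(t)J_n(2kt)^2\,dt$, so matching the stated $\CB_n(k)=\int_0^1 f(t)J_n(kt)^2\,dt$ requires an implicit rescaling of $k$ by a factor of $2$ (equivalently, rescaling $B_2$ to $B_1$), which does not affect any of the qualitative conclusions about the zero set.
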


\begin{remark} \label{REM B_0}
In the radially symmetric case $n=0$ and if we allow the eigenfunctions $u, v$ to lie in the weighted spaces $H^2_{0, \frac{1}{m}}$ and $L^2_m$, respectively (as discussed in Remark~\ref{REM discussion}), then the zeros of the function $\CB_0$ will also be eigenvalues. Therefore, throughout this paper we study the functions $\{\CB_n\}$ including the index $n=0$. 
\end{remark}

Next we collect some basic properties of the functions $\CB_n$.

\begin{lemma} \label{LEM B_n properties}
Let $n\geq 0$ be an integer and $\CB_n(k)$ be given by \eqref{B_n}. Then

\begin{enumerate}
\item[(i)] $\CB_n(k)$ is an entire function that has infinitely many zeros in $\CC$ and it has no real, or purely imaginary zeroes (apart possibly from $k=0$).
\item[(ii)] $\overline{\CB_n(k)} = \CB_n(-\overline{k})$.
\item[(iii)] $\CB_n \to 0$, as $n \to \infty$ uniformly on compact subsets of $\mathbb{C}$.
\item[(iv)] Let $\{k_j\}_{j=1}^\infty \subset \mathbb{C}$ be a sequence of distinct zeros of $\CB_n(k)$, for some fixed $n$, then $|\Im k_j| \to \infty$ as $j \to \infty$.
\end{enumerate}
\end{lemma}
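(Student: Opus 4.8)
The proof of (i)--(iii) is short, so I record it first; the substance is part (iv). For $d=3$ an entirely parallel argument works (e.g.\ using \eqref{j_n via J_n} to rewrite \eqref{B_n} as an integral of $J_{n+1/2}^2$ against a modified weight), so I focus on $d=2$, and write $\nu=n$ for $d=2$ (resp.\ $\nu=n+\tfrac12$ for $d=3$) when it is convenient to treat both at once.

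To begin, $\CB_n$ is entire: $f$ is bounded on $[0,1]$ and $k\mapsto J_n(kt)^2$ is entire and locally uniformly bounded in $(k,t)$, so Morera's theorem together with Fubini applies. Poisson's representation $J_n(z)=\frac{(z/2)^n}{\sqrt\pi\,\Gamma(n+\frac12)}\int_{-1}^1 e^{izs}(1-s^2)^{n-\frac12}\,ds$ gives $|J_n(z)|\le\frac{|z/2|^n}{n!}\,e^{|\Im z|}$ (and an analogous bound for $j_n$), hence $|\CB_n(k)|\le C\,|k|^{2n}e^{2|\Im k|}/(n!)^2$; this proves (iii) and shows each $\CB_n$ has order at most $1$. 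For the location of the zeros I use positivity: when $k\in\RR\setminus\{0\}$ the integrand in \eqref{B_n} is a positive weight times the square of a real-valued Bessel function, hence nonnegative, and it cannot vanish for a.e.\ $t$ (else the Bessel function would vanish identically), so $\CB_n(k)>0$; when $k=i\kappa$ with $\kappa\in\RR\setminus\{0\}$ one has $J_n(i\kappa t)^2=(-1)^nI_n(\kappa t)^2$ with $I_n$ the modified Bessel function (resp.\ $j_n(i\kappa t)^2=(-1)^n i_n(\kappa t)^2$ in $d=3$), so $\CB_n(i\kappa)=(-1)^n\int_0^1 f(t)I_n(\kappa t)^2\,dt\ne0$. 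Part (ii) follows from the reality of $f$ and of the Taylor coefficients of $J_n$ (giving $\overline{\CB_n(k)}=\CB_n(\overline k)$), together with $J_n(-z)^2=J_n(z)^2$, which makes $\CB_n$ even and hence $\CB_n(\overline k)=\CB_n(-\overline k)$.

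It remains to show each $\CB_n$ has infinitely many zeros and to prove (iv). For the former: if $\CB_n$ had only finitely many zeros then, being of order $\le1$, Hadamard's theorem gives $\CB_n(k)=P(k)e^{\beta k}$ with $P$ a polynomial; evenness forces $P(-k)/P(k)=e^{2\beta k}$, which (the left side being rational) is possible only if $\beta=0$ and $P$ is even, so $\CB_n$ would be a polynomial. But the substitution $z=kt$ writes $\CB_n(k)$ as a contour integral of $J_\nu(z)^2$ along $[0,k]$ against explicit powers of $z$, and combining the standard large-argument asymptotics of $J_\nu$ with the Lommel identity $\int_0^x tJ_\nu(t)^2\,dt=\frac{x^2}{2}\big[J_\nu'(x)^2+(1-\tfrac{\nu^2}{x^2})J_\nu(x)^2\big]$ shows $\CB_n(k)\to0$ as $k\to+\infty$ along $\RR$ (in fact $\CB_n(k)\sim\frac{\ln k}{\pi k}$ when $d=2$). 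A polynomial with this property is $\equiv0$, contradicting $\CB_n(1)>0$; hence $\CB_n$ has infinitely many zeros, all in $\CC\setminus(\RR\cup i\RR)$. For (iv) I argue by contradiction: suppose $\CB_n$, for a fixed $n$, has an infinite sequence $\{k_j\}$ of distinct zeros with $|\Im k_j|\le M$. Since the zeros of the nonzero entire function $\CB_n$ are isolated and the strip meets every disc in a compact set, $|\Re k_j|\to\infty$; using that $\CB_n$ is even, pass to a subsequence along which $\Re k_j\to+\infty$. The heart of the argument is then a sharp asymptotic for $\CB_n(k)$ as $R:=\Re k\to+\infty$, uniform for $|\Im k|\le M$. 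The substitution $z=kt$ turns $\CB_n(k)$ into an integral along $[0,k]\subset\CC$ of $J_\nu(z)^2$ against a $k$-dependent linear combination of $z$, $1$ and (only when $d=3$) $z^{-1}$; for each term I deform $[0,k]$ to $[0,R]\cup[R,k]$. On the short vertical segment $[R,k]$ the phase $\arg z$ is small and $|J_\nu(z)|^2=O(1/R)$, so its contribution is $O(1/R)$; on $[0,R]$ there remain the real integrals $\int_0^R tJ_\nu^2$ (which is $\tfrac R\pi+O(1)$ by Lommel), $\int_0^R J_\nu^2$ (which is $\tfrac{\ln R}{\pi}+O(1)$), and, for $d=3$, $\int_0^R t^{-1}J_\nu^2$ (which tends to $\tfrac1{2\nu}$). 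Assembling the pieces gives $\CB_n(k)=\frac{\ln\Re k}{\pi k}\,(1+o(1))$ for $d=2$ and $\CB_n(k)=\frac{\pi}{2(2n+1)\,k}\,(1+o(1))$ for $d=3$, so $|\CB_n(k_j)|>0$ for $j$ large, which is the desired contradiction.

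The step I expect to be the main obstacle is exactly the uniformity of this last asymptotic over the whole strip, as opposed to merely on the real axis: the Debye-type expansion of $J_\nu$ is not valid near $z=0$, and off the real axis the oscillatory factor $\cos\!\big(z-\tfrac{\nu\pi}{2}-\tfrac\pi4\big)$ grows like $e^{|\Im z|}$, so the error terms must be tracked carefully and the cancellation in $J_\nu'(z)^2+J_\nu(z)^2=\frac{2}{\pi z}+O(z^{-2})$ used at the right place. The contour splitting $[0,k]=[0,R]\cup[R,k]$ is what makes this manageable: it confines all the logarithmic and oscillatory behaviour to the real segment, where classical real-variable asymptotics suffice, while the genuinely complex part of the path has bounded length and an integrand on it that is already $O(1/R)$.
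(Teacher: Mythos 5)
Your proposal is correct, and for the most substantive parts it takes a genuinely different route than the paper. For the ``infinitely many zeros'' clause of (i), the paper shows $\CB_n(k)\to 0$ as $k\to\pm\infty$ along $\RR$ by a soft argument (pointwise decay of $J_n(kt)$ plus $|J_n|\le1$ and dominated convergence) and reads off a contradiction with the Hadamard product directly; you instead use evenness to pin down the Hadamard form as a polynomial and then compute the sharp real-axis asymptotic via Lommel's identity. Both are valid, though the paper's route is leaner there.

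For part (iv), the approaches diverge more markedly. The paper keeps $t\in[0,1]$ as the variable of integration, splits at $t=c/\Re k$, and applies the large-argument asymptotic expansion of $J_n(kt)$ for \emph{complex} argument on the outer piece, carefully tracking the $e^{|\Im(kt)|}$ factor via the remainder bound \eqref{R bound}. You instead substitute $z=kt$ so that $\CB_n(k)$ becomes a contour integral of $J_\nu(z)^2$ over the segment $[0,k]$, and then deform that contour to $[0,\Re k]\cup[\Re k,k]$. This confines all the delicate logarithmic/oscillatory bookkeeping to the real segment $[0,\Re k]$, where the purely real Lommel identity $\int_0^x tJ_\nu^2 = \tfrac{x^2}{2}\bigl[J_\nu'(x)^2+(1-\nu^2/x^2)J_\nu(x)^2\bigr]$ and the real Debye expansion give clean asymptotics, while the genuinely complex part of the contour has bounded length ($\le M$) and an integrand already of size $O(1/\Re k)$, so it contributes only a lower-order error. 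Both strategies arrive at the same conclusion: $k\CB_n(k)\sim\frac{\ln\Re k}{\pi}$ for $d=2$ (so $|k\CB_n(k)|\to\infty$) and $k\CB_n(k)\to\frac{\pi}{2(2n+1)}$ for $d=3$. Your contour deformation is an elegant alternative that avoids complex-argument Bessel asymptotics entirely on the main piece; the paper's approach avoids the Lommel machinery. Neither is clearly shorter, and I see no gap in yours.
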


\n To prove the existence of infinitely many zeros of $\CB_n$ and the item $(iv)$ of the above lemma, we borrow some ideas from Theorems 2 and 3 of \cite{CCR16}. The proof of Lemma~\ref{LEM B_n properties} is given in Appendix~\ref{SECT App B_n}. 

Note that the above two lemmas directly imply items $(i)$ and $(ii)$ of Theorem~\ref{THM main}. Now we turn to proving item $(iii)$ of Theorem~\ref{THM main}. 

\begin{lemma}[Discreteness] \mbox{}

\n Let $\CB_n(k)$ be given by \eqref{B_n}, then the set $\{k\in\CC: B_n(k)=0 \ \text{for some} \ n\geq 0\}$ is discrete in $\CC$, i.e. it is a countable set with no limit points in $\CC$.
\end{lemma}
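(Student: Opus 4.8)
The plan is to combine the discreteness of the zero set of each individual $\CB_n$ with the uniform decay $\CB_n \to 0$ from Lemma~\ref{LEM B_n properties}(iii) to rule out accumulation of the union. Fix an arbitrary compact set $K \subset \CC$; it suffices to show $K$ contains only finitely many BTEs. First I would observe that by Lemma~\ref{LEM B_n properties}(i) each $\CB_n$ is entire and not identically zero (it has infinitely many zeros), so by the identity theorem its zero set is discrete, and in particular $\CB_n$ has only finitely many zeros in $K$. The issue is therefore only whether infinitely many \emph{different} functions $\CB_n$ contribute zeros in $K$.

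The key step is to show that $\CB_n$ has \emph{no} zeros in $K$ once $n$ is large enough. By Lemma~\ref{LEM B_n properties}(iii), $\sup_{k\in K}|\CB_n(k)| \to 0$ as $n\to\infty$, but mere smallness does not by itself preclude a zero. Instead I would get a quantitative lower bound on $|\CB_n|$ somewhere on a slightly enlarged compact set and then invoke an argument-principle / minimum-modulus-type comparison. Concretely, the cleanest route is to use the leading-order behavior of the Bessel (resp. spherical Bessel) function near the origin: $J_n(z) \sim \tfrac{1}{n!}(z/2)^n$ and similarly for $j_n$, so that for $k$ in a fixed compact set,
\[
\CB_n(k) = \Big( \tfrac{k}{2} \Big)^{2n} \frac{1}{(n!)^2}\int_0^1 f(t)\, t^{2n}\, dt \,\big(1 + o(1)\big)
\]
in $d=2$, with an analogous expression in $d=3$, and the integral $\int_0^1 f(t) t^{2n}\,dt$ is strictly positive and of size $\sim c/n$. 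Hence $\CB_n(k)$ factors as a nonvanishing explicit prefactor times $(1+o(1))$ uniformly for $k$ in $K$, so for $n$ large and $k\in K$ we get $\CB_n(k)\neq 0$ provided $k \neq 0$ (and $k=0$ is excluded from the BTE set by definition).

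Finishing: choose $N$ so large that $\CB_n$ has no zero in $K\setminus\{0\}$ for all $n\ge N$. Then every BTE lying in $K$ is a zero of one of the finitely many functions $\CB_0,\dots,\CB_{N-1}$, each of which contributes only finitely many zeros in $K$; hence $K$ contains finitely many BTEs. Since $K$ was an arbitrary compact subset of $\CC$, the BTE set has no limit point in $\CC$ and is therefore discrete (and countable, being a countable union of finite sets). The main obstacle is making the ``uniformly small implies nonvanishing'' step rigorous on a compact set: the honest way is the uniform leading-order expansion above together with uniform control of the remainder in the power series for $J_n$ (or $j_n$) over $K$, which is standard but must be stated with care, especially near $k=0$ where one should either note that $k=0$ is not a BTE or factor out the $k^{2n}$ explicitly as done above.
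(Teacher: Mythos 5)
Your argument is correct and is essentially the same as the paper's: both reduce to the fact that for large $n$, the dominant term of $J_n(kt)$ (and $j_n(kt)$) on compact sets is $\tfrac{1}{n!}(kt/2)^n$, so $\CB_n(k)$ is a nonzero explicit prefactor times $\int_0^1 f(t)t^{2n}\,dt\,(1+o(1))$ and cannot vanish for $k$ in a fixed compact set once $n$ is large. The paper phrases it as a contradiction with a convergent sequence of zeros and invokes the handbook asymptotic $J_n(z)\sim\tfrac{1}{\sqrt{2\pi n}}(ez/2n)^n$ (equivalent to yours by Stirling), while you argue directly that any compact $K$ meets finitely many zero sets; the content is identical.
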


\begin{proof}
We give the proof for $d=2$, as for $d=3$ the argument is analogous. Assume, for the sake of a contradiction that $k\in\CC$ is a limit point of the set of zeros of $\{\CB_n\}$, let $\{k_j\}$ be a sequence of distinct points such that $\CB_{n_j}(k_j) = 0$ for some $n_j \geq 0$, all $j\geq 1$ and $k_j \to k$. Let us assume that $k_j \neq 0$ for all $j$. If $\{n_j\}$ stays bounded, then $\{k_j\}$ is a sequence of zeros of finitely many $\{\CB_{n_j}\}$ and upon passing to a subsequence $\{k_j'\} \subset \{k_j\}$, for some fixed integer $n_0$ we have $\CB_{n_0}(k_j') = 0$ for all $j$. Since $\CB_{n_0}$ is an entire function we conclude that $\CB_{n_0} \equiv 0$, which is a contradiction. Thus, we may assume $n_j \to \infty$.

We will use the large order asymptotic expansion of the Bessel function (cf. 9.3.1 of \cite{handbook}): as $n \to \infty$

\begin{equation} \label{large n z fixed}
J_n(z) \sim \frac{1}{\sqrt{2\pi n}} \left( \frac{ez}{2n} \right)^n,
\end{equation}

\n which holds uniformly for $z \in \CC$ bounded. In our case $z=k_j t$ stays bounded uniformly for $j$ and $t\in[0,1]$. Consequently, as $j\to\infty$

\begin{equation*}
\CB_{n_j}(k_j) \sim \frac{1}{2\pi n_j} \left( \frac{e k_j}{2n_j} \right)^{2n_j} \int_0^1 t^{2n_j} f(t) dt.  
\end{equation*} 

\n Therefore, for $j$ large enough

\begin{equation*}
|\CB_{n_j}(k_j)| \geq \frac{1}{4\pi n_j} \left( \frac{e |k_j|}{2n_j} \right)^{2n_j} \int_0^1 t^{2n_j} f(t) dt,  
\end{equation*} 

\n which provides the desired contradiction, as the expression on the right hand side is positive: $f(t)>0$ for all $t\in(0,1)$. 
 
\end{proof}

\section{The Born transmission eigenvalues and the real line} \label{SECT Born and R}
\setcounter{equation}{0}

The goal of this section is to establish item $(iv)$ of Theorem~\ref{THM main}. Let us introduce

\begin{equation} \label{alpha beta}
\alpha(z) = \ln \left( \frac{1+\sqrt{1-z^2}}{z}\right) - \sqrt{1-z^2},
\qquad \qquad
\beta(z) = \sqrt{z^2-1}-\arccos(z^{-1}),
\end{equation}

\n where all the functions are defined by their principal branches. Let also

\begin{equation} \label{V_0 and V_1}
V_0 = \{z \in \CC: 0<\Re z < 1\}, \qquad \qquad V_1 = \{z \in \CC: \Re z > 1\}.
\end{equation}

\n We remark that $\alpha$ is analytic in $\{\Re z > 0\} \backslash [1,\infty)$, while $\beta$ in $\{\Re z > 0\} \backslash (0,1]$. These functions show up in the uniform asymptotic expansion of Bessel functions of large order as stated in the lemma below.

\begin{lemma} \label{LEM Bessel unif}
Let $\nu \in \RR$, $\epsilon \in (0,1)$ and assume the notation introduced above. As $\nu \to \infty$,

\begin{equation} \label{Bessel unif z<1}
J_\nu^2(\nu z) \sim \frac{1}{2\pi \nu \sqrt{1-z^2}} e^{-2 \nu \alpha(z)},
\end{equation}

\n uniformly for $z\in V_0$ with $|z|<1-\epsilon$. 

Let now $\{z_\nu\} \in V_1$ be a sequence with $|z_\nu|>1+\epsilon$ and

\begin{equation} \label{nu im z_nu}
\nu |\Im z_\nu| \leq c,
\end{equation}

\n for all $\nu$ large enough and some constant $c>0$ independent of $\nu$. Then, as $\nu \to \infty$

\begin{equation} \label{Bessel unif z>1}
J_\nu^2(\nu z_\nu) = \frac{2}{\pi \nu \sqrt{z_\nu^2-1}} \left[ \cos^2\left(\nu \beta(z_\nu) - \tfrac{\pi}{4}\right) + O \left( \frac{1}{\nu} \right)  \right],
\end{equation}

\n where the implicit constant in $O$ depends only on $\epsilon$ and $c$.

\end{lemma}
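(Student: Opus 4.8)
The plan is to obtain both asymptotics from Olver's uniform expansion of $J_\nu(\nu z)$ in terms of Airy functions, which is valid in a full complex neighborhood of the turning point $z=1$ and is governed by the Liouville--Green variable $\zeta = \zeta(z)$ satisfying $\tfrac23\zeta^{3/2} = \alpha(z)$ for $0<z<1$ and $\tfrac23(-\zeta)^{3/2} = \beta(z)$ for $z>1$. Recall Olver's formula
\begin{equation*}
J_\nu(\nu z) = \left(\frac{4\zeta}{1-z^2}\right)^{1/4} \left[ \frac{\Ai(\nu^{2/3}\zeta)}{\nu^{1/3}}\left(1 + O(\nu^{-2})\right) + \frac{\Ai'(\nu^{2/3}\zeta)}{\nu^{5/3}}\left(O(1)\right) \right],
\end{equation*}
with error bounds uniform for $z$ in the relevant sectorial regions. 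First I would treat the case $z\in V_0$, $|z|<1-\epsilon$: here $\zeta$ is bounded away from $0$ with $\Re\zeta>0$ bounded below (since $\Re\alpha(z)>0$ on this set), so $\nu^{2/3}\zeta$ tends to infinity in a sector around the positive real axis, and I substitute the classical asymptotics $\Ai(w)\sim \tfrac{1}{2\sqrt{\pi}}w^{-1/4}e^{-\frac23 w^{3/2}}$, $\Ai'(w)\sim -\tfrac{1}{2\sqrt\pi}w^{1/4}e^{-\frac23 w^{3/2}}$. The $\Ai'$ term is then smaller by a factor $\nu^{-1}$, the prefactors combine (using $\tfrac23(\nu^{2/3}\zeta)^{3/2} = \nu\cdot\tfrac23\zeta^{3/2} = \nu\alpha(z)$ and the algebraic identity $\left(\tfrac{4\zeta}{1-z^2}\right)^{1/4}\cdot\tfrac{1}{2\sqrt\pi}(\nu^{2/3}\zeta)^{-1/4} = \tfrac{1}{\sqrt{2\pi\nu}}(1-z^2)^{-1/4}$), and squaring gives \eqref{Bessel unif z<1}.

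Next I would handle the sequence $\{z_\nu\}\in V_1$ with $|z_\nu|>1+\epsilon$ and $\nu|\Im z_\nu|\le c$. Here the key observation is that $\zeta(z_\nu)$ is negative-real-bounded-away-from-zero up to a correction: writing $z_\nu = x_\nu + iy_\nu$ with $x_\nu>1+\epsilon'$ and $\nu|y_\nu|\le c$, a Taylor expansion of $\zeta$ about the real point $x_\nu$ gives $\nu^{2/3}\zeta(z_\nu) = \nu^{2/3}\zeta(x_\nu) + \nu^{2/3}\zeta'(x_\nu)iy_\nu + \cdots$, and since $\nu^{2/3}|y_\nu| \le c\nu^{-1/3}\to 0$ while $\zeta(x_\nu), \zeta'(x_\nu)$ stay bounded on $|z|>1+\epsilon$, the argument $\nu^{2/3}\zeta(z_\nu)$ stays within $O(\nu^{-1/3})$ of the negative real axis and tends to $-\infty$ in modulus. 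On the negative axis one uses $\Ai(-s) = \pi^{-1/2}s^{-1/4}\left[\cos(\tfrac23 s^{3/2} - \tfrac\pi4) + O(s^{-3/2})\right]$ and the corresponding derivative expansion; the phase is $\tfrac23(\nu^{2/3}(-\zeta))^{3/2} = \nu\beta(z_\nu)$, and the $\nu|y_\nu|\le c$ hypothesis is exactly what keeps $\cos^2(\nu\beta(z_\nu)-\tfrac\pi4)$ from blowing up (since $\Im(\nu\beta(z_\nu))$ stays bounded). Combining the prefactor identity (the real version of the one above, $\left(\tfrac{4\zeta}{1-z^2}\right)^{1/4}$ now producing $(z^2-1)^{-1/4}$ up to the bounded phase), squaring, and carefully tracking that cross terms and the $\Ai'$ contribution land in the $O(\nu^{-1})$ bracket, yields \eqref{Bessel unif z>1}.

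The main obstacle is controlling the transition region and, more precisely, making the error estimates genuinely uniform: for \eqref{Bessel unif z<1} one must ensure the constraint $|z|<1-\epsilon$ (together with $\Re z>0$) keeps $z$ inside Olver's domain of validity and keeps $\zeta$ bounded away from $0$, so that the descent from Airy to exponential asymptotics is uniform; for \eqref{Bessel unif z>1} the subtlety is that $z_\nu$ is allowed to approach the real axis but only at rate $\nu^{-1}$, and one must verify that this is precisely the scaling under which (a) $\nu^{2/3}\zeta(z_\nu)$ stays in the sector where the oscillatory Airy asymptotics hold with uniform error, and (b) the resulting $O(1/\nu)$ term in \eqref{Bessel unif z>1} absorbs both the intrinsic Olver error $O(\nu^{-2})$ (relative) and the $O(\nu^{-1})$ corrections from the $\Ai'$ term and from expanding $\zeta$ off the real axis. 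I would organize this by first proving everything on the real segment (where it is classical), then perturbing in the imaginary direction using the bound $\nu|\Im z_\nu|\le c$, keeping the dependence of all implied constants on $\epsilon$ and $c$ explicit.
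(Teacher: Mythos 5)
Your proposal is correct and follows essentially the same route as the paper: both begin from Olver's uniform Airy-type expansion of $J_\nu(\nu z)$ (the paper's \eqref{Bessel uniform}, citing A\&S 9.3.35--9.3.42), then substitute the large-argument asymptotics of $\Ai$ and $\Ai'$ in the exponentially-decaying sector for $z\in V_0$, $|z|<1-\epsilon$, and the oscillatory asymptotics near the negative real axis for $z_\nu\in V_1$, $|z_\nu|>1+\epsilon$ with $\nu|\Im z_\nu|\le c$, using Olver's Theorem~B for the error bound in the latter case. Your write-up simply spells out more of the prefactor algebra and the role of the hypothesis $\nu|\Im z_\nu|\le c$ in keeping $\Im(\nu\beta(z_\nu))$ bounded, which the paper leaves implicit.
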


\n The above lemma directly follows from the uniform asymptotic formulas of Bessel functions due to Olver. Specifically we first use the formulas 9.3.35 - 9.3.42 of \cite{handbook} to expand $J_n$ in terms of the Airy function and its derivative (see also \eqref{Bessel uniform} in the Appendix, where this expansion is written down for future use). We then use the large argument asymptotics of the Airy function and its derivative (\cite{handbook} 10.4.59 and 10.4.61) in the sector away from the real negative semiaxis. This yields the formula \eqref{Bessel unif z<1}. To obtain \eqref{Bessel unif z>1} we use the asymptotics of Airy functions (\cite{handbook} 10.4.60 and 10.4.62) in the sector containing the real negative semiaxis. The error bound in \eqref{Bessel unif z>1} follows from Theorem B of \cite{olverTHMB54}.

\vspace{.05in}

With these preliminaries, let us start the proof of part $(iv)$ of Theorem~\Ref{THM main}. For the sake of contradiction, assume that there exists a sequence $\{k_j\}_{j=1}^\infty \subset \mathbb{C}$ of distinct BTEs, such that $\Im k_j \to 0$, as $j \to \infty$. 

\begin{notation}
Unless stated otherwise, for the remainder of this section (including all the subsections) the limit and asymptotic relations are understood as $j \to \infty$.
\end{notation}

\n For each $k_j$ there exists an integer $n_j \geq 0$ with $\CB_{n_j}(k_j) = 0$. If $\{n_j\}$ stays bounded, upon passing to a subsequence, which we do not relabel, $\CB_{n_0}(k_j) = 0$ for all $j \geq 1$ and some fixed integer $n_0$. But item $(iv)$ of Lemma~\Ref{LEM B_n properties} implies that $\{k_j\}$ cannot lie in any horizontal strip, which contradicts the boundedness of $\{\Im k_j\}$.

Thus, we may assume $n_j \to \infty$. Further, if $\{\Re k_j\}$ stays bounded, passing to a subsequence one concludes that BTEs have a limit point in $\mathbb{C}$, which cannot happen due to their discreteness as established in part $(iii)$ of Theorem~\Ref{THM main}. Therefore, $\{\Re k_j\}$ is unbounded and as BTEs are symmetric about the imaginary axis, we restrict our analysis to the right half-plane and assume that $\Re k_j \to +\infty$. To summarize we have:

\begin{equation} \label{limits}
n_j \to \infty,
\qquad \qquad
\Re k_j \to +\infty, \quad \text{and} \quad \Im k_j \to 0.
\end{equation}

\n As described in Section~\ref{SECT main result}, to achieve the desired contradiction our goal will be to establish a lower bound on $\CB_{n_j}(k_j)$ of the form \eqref{EB ineq}. The behavior of $\CB_{n_j}(k_j)$ is quite delicate and depends on the growth rate of the sequence $\{n_j\}$ relative to $\{k_j\}$. There are three main regimes:  

\begin{equation} \label{cases}
\begin{split}
\text{Case I:}& \quad n_j \gg |k_j|
\\[.1in]
\text{Case II:}& \quad n_j \ll |k_j|
\\[.1in]
\text{Case III:}& \quad k_j/n_j \to L, \ \text{for some} \ L>0 \ \text{and one of the following holds true:}
\\
\text{a.} & \quad L<1 
\\
\text{b.} & \quad L>1 
\\
\text{c.} & \quad L=1
\end{split}
\end{equation}

\n In each of these regimes the behavior of $\CB_{n_j}(k_j)$ is different. Further, in the subcase $L=1$, it even depends on the rate of convergence of $k_j/n_j$ to $L=1$. Before outlining the main ideas, let us first show that the above cases are exhaustive for our purposes. Indeed, our desired contradiction will be obtained if a lower bound of form \eqref{EB ineq} is established for some subsequences of $\{n_j\}$ and $\{k_j\}$. In other words, we can pass to subsequences when necessary. Now, if Case II is not satisfied, then $k_j/n_j$ has a bounded subsequence. Passing to a further subsequence, without relabeling it, we may assume $k_j/n_j \to L$, for some $L \in \CC$. Due to \eqref{limits}, $L \geq 0$. If $L=0$, then Case I holds, otherwise: Case III.   

Turning to the main ideas, we will present all the proofs when $d=2$, as $d=3$ is completely analogous due to the relation \eqref{j_n via J_n} and we will only state the corresponding results with additional comments whenever necessary. Note that we can rewrite

\begin{equation} \label{B_n with z_j(t)}
\CB_{n_j}(k_j) = \int_0^{1} f(t) J_{n_j}^2(n_j z_j(t)) dt, \qquad \qquad z_j(t)= \frac{k_j}{n_j} t.
\end{equation}

\n The idea is to use the asymptotic relations in Lemma~\ref{LEM Bessel unif} to replace $J_{n_j}^2(n_j z)$ with an appropriate expression. The behavior of $J_{n_j}^2(n_j z)$, however, is very delicate near the so-called turning point $z=1$. Lemma~\ref{LEM Bessel unif} already demonstrates different behaviors depending on whether $z=z_j(t)$ lies to the left, or to the right of the line $\{\Re z=1\}$ and stays away from $z=1$ (when it approaches $z=1$ the behavior is quite complicated and depends on the convergence rate of $z_j$ to $1$). Case I of \eqref{cases} is the easiest to analyze, as $z=z_j(t)$ always stays away from $z=1$ and lies to the left of $\{\Re z =1\}$, therefore the asymptotic relation \eqref{Bessel unif z<1} can be readily used. The other two cases require more work. We split the integral in \eqref{B_n with z_j(t)} into parts such that for each $t$ in the respective integration region $z=z_j(t)$ stays away from $z=1$ and either the relation \eqref{Bessel unif z<1} or \eqref{Bessel unif z>1} can be used. It is the integral, in which $z_j(t)$ can be arbitrarily close to $z=1$ that needs special care. In Case II we use Mellin transforms to treat this integral. In Case III, apart from using Mellin transforms, we also use the asymptotic expansion of $J_{n_j}^2(n_j z)$ in terms of the Airy function and its derivative. The behavior of $\CB_{n_j}(k_j)$ is then described in terms of the Airy function.

Case I is treated in Section~\ref{SECT case 2}, Case II in Section~\ref{SECT n<k} and Case III in Section~\ref{SECT n comparable k}.  

\subsection{Case I: the regime $n_j \gg |k_j|$} \label{SECT case 2}

\n In \eqref{B_n with z_j(t)} for $j$ large enough $|z_j(t)|<1/2$ uniformly for $t \in (0,1)$. Therefore, applying the first part of Lemma~\Ref{LEM Bessel unif} and conclude that

\begin{equation} \label{B asymp case2 first}
\CB_{n_j}(k_j) \sim \frac{1}{2\pi n_j} \int_0^{1} \frac{f(t)}{\sqrt{1-z_j(t)^2}} e^{-2n_j \alpha(z_j(t))} dt \sim \frac{1}{2\pi n_j} \int_0^{1} f(t) e^{-2n_j \alpha(z_j(t))} dt,
\end{equation}

\n where in the second step we used that $z_j(t) \to 0$ uniformly in $t$. The function $\alpha$ given by \eqref{alpha beta}, blows up near the origin, namely as $z \to 0$ from inside the right half-plane, $\alpha(z) \sim \ln \left( \frac{2}{z}\right) -1$. Therefore we introduce the function

\begin{equation} \label{alpha tilde}
\widetilde{\alpha}(z) = \alpha(z) +1 - \ln \frac{2}{z} = 1 - \sqrt{1-z^2} + \ln \frac{1+\sqrt{1-z^2}}{2}.
\end{equation}

\n It is straightforward to show that as $z \to 0$ from inside the right half-plane, $\widetilde{\alpha}(z) \sim \frac{z^2}{4}$. In terms of the function $\widetilde{\alpha}$, we can rewrite the asymptotics \eqref{B asymp case2 first} as

\begin{equation*}
\CB_{n_j}(k_j) \sim \frac{1}{2\pi n_j} \left( \frac{k_j e}{2 n_j} \right)^{2n_j} \int_0^{1} f(t) t^{2n_j} e^{-2n_j \widetilde{\alpha}(z_j(t))} dt \sim \frac{1}{\left(n_j!\right)^2} \left( \frac{k_j}{2}\right)^{2n_j} \int_0^{1} f(t) t^{2n_j} e^{-2n_j \widetilde{\alpha}(z_j(t))} dt ,
\end{equation*}

\n where we used the Stirling's formula in the last step. We will prove below that for any $z$ with $0<\Re z < 1$ and $|z|<C$ we have

\begin{equation} \label{alpha tilde bound}
\left| \Im \widetilde{\alpha}(z) \right| \leq C |\Im z|.
\end{equation}

\n Applying this inequality, we conclude that for $j$ large enough

\begin{equation*}
n_j \left|\Im \widetilde{\alpha}(z_j(t))\right| \leq t |\Im k_j| \leq |\Im k_j|,  
\end{equation*}

\n which converges to zero as $j \to \infty$ uniformly in $t$, due to our assumption on $\{k_j\}$ (cf. \eqref{limits}). Therefore, as $j \to \infty$

\begin{equation*}
 e^{-i 2n_j \Im \widetilde{\alpha}(z_j(t))} \to 1,
\end{equation*}

\n uniformly for $t \in (0,1)$. Consequently,

\begin{equation} \label{B case 2 positive}
\left(n_j!\right)^2 \left( \frac{2}{k_j} \right)^{2n_j} \CB_{n_j}(k_j) \sim  \int_0^{1} f(t) t^{2n_j} e^{-2n_j \Re \widetilde{\alpha}(z_j(t))} dt.
\end{equation}

\n The right hand side of the above asymptotic relation is real-valued and positive. This provides the desired contradiction. 

Thus, to finish the proof it remains to establish \eqref{alpha tilde bound}. Let $|z|<1-\epsilon$ for some small $\epsilon>0$ and $\Re z >0$. We have the representation

\begin{equation*}
\widetilde{\alpha}(z) = \int_0^z \frac{\xi d\xi}{1+\sqrt{1-\xi^2}} = \int_0^{\Re z} \frac{x dx}{1+\sqrt{1-x^2}} + \int_{\Re z}^z \frac{\xi d\xi}{1+\sqrt{1-\xi^2}},
\end{equation*}

\n where we chose the contour of integration to be the line segment joining $0$ to $\Re z$, followed by the one joining $\Re z$ to $z$. As the first integral in the above formula is real, it does not affect the imaginary part of $\widetilde{\alpha}$. For the second integral $|1+\sqrt{1-\xi^2}| \geq 1$, which implies the desired bound

\begin{equation*}
\left| \Im \widetilde{\alpha}(z) \right| \leq |z| \cdot |z-\Re z| < (1-\epsilon) |\Im z|.
\end{equation*}

\subsection{Case II: the regime $n_j \ll |k_j|$} \label{SECT n<k}
Due to the relation \eqref{j_n via J_n} between the Bessel functions $j_n$ and $J_n$, it is convenient to introduce the index notation

\begin{equation} \label{nu_j}
\nu_j= 
\begin{cases}
n_j, \qquad \qquad &d=2
\\
n_j+\frac{1}{2}, &d=3.
\end{cases}
\end{equation}

\n We proceed by letting $d=2$. The above notation helps to treat the case $d=3$ in parallel: when $d=3$, in what follows one just replaces $J_n$ with $j_n$. 

As already discussed in the paragraph below \eqref{B_n with z_j(t)}, we first split the integral defining $\CB_n$, into parts where $z_j(t)=k_j t/n_j$ stays away from $z=1$. To that end, let us fix a small parameter $\epsilon>0$ and split

\begin{equation} \label{B_n split n<k}
\CB_{n_j}(k_j) = \int_0^{(1-\epsilon) \frac{\nu_j}{|k_j|}} + \int_{(1-\epsilon) \frac{\nu_j}{|k_j|}}^{(1+\epsilon) \frac{\nu_j}{|k_j|}} + \int_{(1+\epsilon) \frac{\nu_j}{|k_j|}}^1,
\end{equation}

\n where the integrand $f(t) J_{n_j}^2(k_j t) dt$ was suppressed from the notation. Note that in the first and third integrals $z_j(t)$ stays away from $z=1$, while in the second one it can get arbitrarily close to $z=1$. To analyze the second integral we first change the variables

\begin{equation} \label{middle integral change of var}
\int_{(1-\epsilon) \frac{\nu_j}{|k_j|}}^{(1+\epsilon) \frac{\nu_j}{|k_j|}} f(t) J_{n_j}^2(k_j t) dt
= 
\frac{\nu_j}{k_j} \int_{(1-\epsilon)\frac{k_j}{|k_j|}}^{(1+\epsilon)\frac{k_j}{|k_j|}} f \left( \frac{\nu_j}{k_j} z \right) J_{n_j}^2 (\nu_j z) dz,
\end{equation}

\n where in the last integral we take the contour of integration to be the line segment connecting the two endpoints (in fact, it can be taken to be an arbitrary path, as the integrand is an entire function of $z$). Note that $k_j/|k_j| \to 1$, due to our assumptions on $k_j$ (cf. \eqref{limits}). Therefore, the contour of integration approaches to the interval $[1-\epsilon, 1+\epsilon]$ in the limit. Further, $k_j/|k_j| = e^{i \arg k_j}$ simply rotates this interval and so the integral in \eqref{middle integral change of var} is over the line segment $PQ$ as shown in Figure~\ref{FIG contour}. 

\begin{figure}[h]
\center
\captionsetup{width=.7\linewidth}
\includegraphics[scale=0.6]{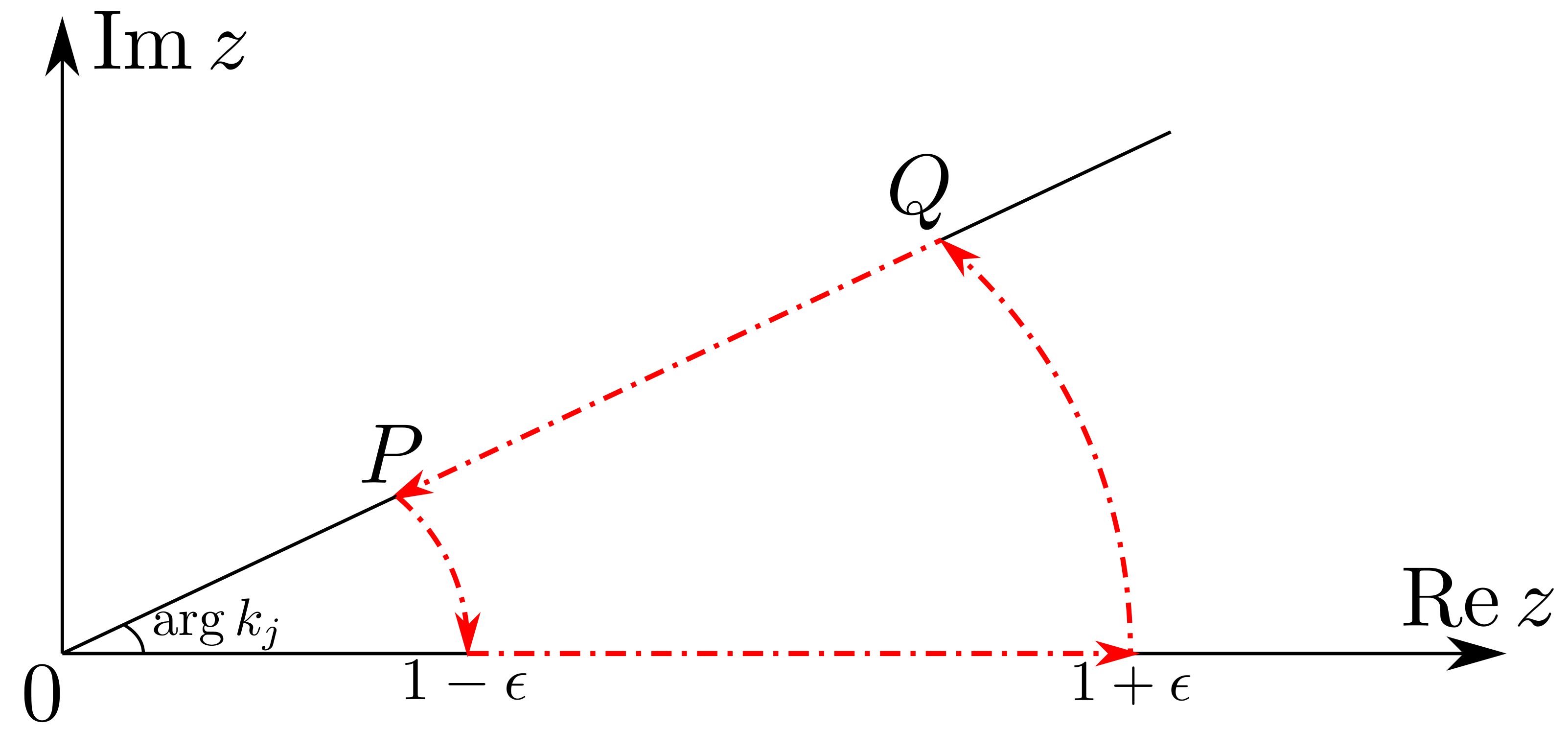}
\caption{The contour of integration.}
\label{FIG contour}
\end{figure}

\n We are going to analyze the integral \eqref{middle integral change of var} via Mellin transform, however this technique does not work when $z$ is complex (cf. Remark~\ref{REM Mellin complex var}). Therefore, we first need to deform the integral of the analytic function $z \mapsto f \left( \frac{\nu_j}{k_j} z \right) J_{n_j}^2 (\nu_j z)$ over $PQ$ to the integral over $[1-\epsilon, 1+\epsilon]$, taking into account the two integrals over circular arcs as shown in Figure~\ref{FIG contour}. The integral over the circular arc joining $P$ to $1-\epsilon$ will be grouped with the first integral in \eqref{B_n split n<k}, while the integral over the other circular arc will be grouped with the second integral in \eqref{B_n split n<k}. 
Putting things together we arrive at the following decomposition:

\begin{equation*}
\CB_{n_j}(k_j) = I_1 + I_2 + I_3,
\end{equation*}

\n where

\begin{equation} \label{I1 and I2}
\begin{split}
I_1 &= \int_0^{(1-\epsilon) \frac{\nu_j}{|k_j|}} f(t) J_{n_j}^2(k_jt) dt - i \frac{\nu_j}{k_j} (1-\epsilon) \int_0^{\arg k_j} f \left( \frac{\nu_j}{k_j} (1-\epsilon) e^{i \theta} \right) J_{n_j}^2 \left(\nu_j (1-\epsilon) e^{i \theta}\right) e^{i \theta} d \theta,
\\[.1in]
I_2 &= \int_{(1+\epsilon) \frac{\nu_j}{|k_j|}}^1 f(t) J_{n_j}^2(k_jt) dt + i\frac{\nu_j}{k_j} (1+\epsilon) \int_0^{\arg k_j} f \left( \frac{\nu_j}{k_j} (1+\epsilon) e^{i \theta} \right) J_{n_j}^2 \left(\nu_j (1+\epsilon) e^{i \theta}\right) e^{i \theta} d \theta
\end{split}
\end{equation}

\n and

\begin{equation} \label{I3}
I_3 = \frac{\nu_j}{k_j} \int_{1-\epsilon}^{1+\epsilon} f \left( \frac{\nu_j}{k_j} t \right) J_{n_j}^2 (\nu_j t) dt.
\end{equation}

\n Note that $I_l = I_l(n_j, k_j, \epsilon)$ for $l=1,2,3$, but these dependencies will be suppressed for the ease of notation. The advantage of above decomposition is that in $I_1, I_2$ we can use the large order asymptotics of the Bessel function from Lemma~\ref{LEM Bessel unif} as $z$ stays away from $1$. While in $I_3$, the integration variable is real and we can apply Mellin transform techniques. In the subsections below, we will show that

\begin{equation} \label{I1 I2 I3 n<k}
k_j I_1 \to 0, \qquad \qquad I_2 \sim \frac{\ln (k_j / n_j)}{\pi k_j}, \qquad \qquad k_j I_3 \to c_1, 
\end{equation}

\n where $c_1 = c_1(\epsilon)$ is some constant (cf. \eqref{kI3 asymp}). These asymptotic relations in particular imply

\begin{equation*}
\frac{\pi k_j}{\ln (k_j / n_j)} \CB_{n_j}(k_j) \sim 1,
\end{equation*}

\n which yields the desired inequality of the form \eqref{EB ineq} and contradicts the fact that $\{k_j\}$ are zeros of $\CB_{n_j}$.

\begin{remark} \label{REM case 1 d=3}
\normalfont
When $d=3$, the integrals $I_l$ for $l=1,2,3$ are given by the same formulas as above, except with $j_n$ in place of $J_n$. The asymptotic relations in this case take the form

\begin{equation*}
n_j k_j I_1 \to 0, \qquad \qquad n_j k_j I_2 \to \frac{1}{2} \arctan \frac{1}{\sqrt{\epsilon^2 + 2\epsilon}}, \qquad \qquad n_j k_j I_3 \to c_2, 
\end{equation*}

\n where $c_2=c_2(\epsilon)$ is a constant, such that $c_2(\epsilon) \to 0$ as $\epsilon \to 0$ (cf. \eqref{kI3 asymp 3d}). These readily imply that

\begin{equation*}
\lim_{j\to \infty} n_j k_j \CB_{n_j}(k_j) = \frac{1}{2} \arctan \frac{1}{\sqrt{\epsilon^2 + 2\epsilon}} + c(\epsilon).
\end{equation*}

\n As the left hand side of the last expression does not depend on $\epsilon$, we can take limits as $\epsilon \to 0$ and conclude

\begin{equation*}
\frac{4}{\pi} n_j k_j  \CB_{n_j}(k_j) \sim 1.
\end{equation*}

\end{remark}

\subsubsection{Analyzing $I_1$ via uniform asymptotics of the Bessel function} \label{SECT I1}

Let $I_1$ be given by \eqref{I1 and I2} and $d=2$. Here we show the first relation of \eqref{I1 I2 I3 n<k}, i.e. $k_j I_1 \to 0$. In fact, we will see below that this convergence to zero is at an exponential rate with respect to $n_j$. The formula of $I_1$ in \eqref{I1 and I2} contains two integral terms, the first of which we denote by $I_{11}$ and the second one by $I_{12}$, so that

$$I_1 = I_{11} + I_{12}.$$

\n We start by analyzing $I_{11}$. Note that it can be rewritten as

\begin{equation*}
I_{11} = \int_0^{(1-\epsilon) \frac{n_j}{|k_j|}} f(t) J_{n_j}^2(n_j z_j(t)) dt,
\qquad \qquad z_j(t) = \frac{k_j}{n_j} t.
\end{equation*}

\n For any $t$ in the integration interval

\begin{equation} \label{z_j < 1-eps}
|z_j(t)| \leq \frac{|k_j|}{n_j} t < 1-\epsilon 
\end{equation}

\n and $z_j(t)$ lies in the right half-plane, therefore the formula \eqref{Bessel unif z<1} of Lemma~\Ref{LEM Bessel unif} can be used to obtain the asymptotics, as $j \to \infty$, of the Bessel function in the integrand of $I_{11}$, uniformly for $t$ inside the integration interval. Hence, this asymptotic relation can be multiplied by $f(t)$ and integrated in $t$. This gives the asymptotic behavior of $I_{11}$. Multiplying the latter by $k_j$ and taking absolute values we conclude that, as $j\to\infty$

\begin{equation} \label{k_j I_11}
\left| k_j I_{11} \right| \sim \frac{|k_j|}{2\pi n_j} \left| \int_0^{(1-\epsilon) \frac{n_j}{|k_j|}} \frac{f(t)}{\sqrt{1-z_j^2(t)}} e^{-2n_j \alpha(z_j(t))} dt  \right| \leq \frac{C |k_j|}{\epsilon n_j} \int_0^{(1-\epsilon) \frac{n_j}{|k_j|}} e^{-2n_j \Re \alpha(z_j(t))} dt,
\end{equation}

\n where $\alpha$ is given by \eqref{alpha beta}, $C$ is an absolute constant and we used \eqref{z_j < 1-eps} to bound the square root term from below in the denominator of above integrand. Note that since $\{\Im k_j\}$ is bounded, due to \eqref{limits},

\begin{equation} \label{Im z_j to 0 n<k}
\Im z_j(t) = \frac{\Im k_j}{n_j} t \to 0, \qquad \text{as} \quad j \to \infty
\end{equation}

\n uniformly for $t$ inside the integration interval. Next we need the following lower bound on $\alpha$, which holds provided the imaginary part of its argument is sufficiently small (this is guaranteed by \eqref{Im z_j to 0 n<k}): there exists $j_0>0$ and $c=c_\epsilon>0$, such that

\begin{equation*}
\Re \alpha(z_j(t)) \geq c, \qquad j\geq j_0, \quad t \in \left(0,\tfrac{(1-\epsilon) n_j}{|k_j|}\right). 
\end{equation*}

\n This is a consequence of the explicit form of the function $\alpha$ and the proof can be found in part $(ii)$ of Lemma~\Ref{LEM alpha beta prop} in the Appendix (in fact, here we need the weaker version of the lower bound \eqref{alpha lower real}, where we drop the logarithm term. The stronger lower bound as formulated in \eqref{alpha lower real} is needed in the case $d=3$). Now, for large enough $j$, the right hand side of \eqref{k_j I_11} can be bounded by $C_\epsilon e^{-2n_jc_\epsilon}$, which converges to zero concluding the proof for $I_{11}$.

Let us now turn to $I_{12}$:

\begin{equation*}
I_{12} = - i\frac{n_j}{k_j} (1-\epsilon) \int_0^{\arg k_j} f \left( \frac{n_j}{k_j} z(\theta) \right) J_{n_j}^2 \left(n_j z(\theta)\right) e^{i \theta} d \theta,
\end{equation*}

\n where $z(\theta) = (1-\epsilon) e^{i \theta}$ does not depend on $j$. It has exactly the same properties as $z_j(t)$ above. The only difference from the above analysis is that the argument of $f$ now depends on $j$, but note that it converges to $0$ uniformly in $\theta$, hence $f$ can be replaced with $f(0)$ in the large $j$ asymptotics. Therefore,

\begin{equation*}
\left| k_j I_{12} \right| \sim \frac{(1-\epsilon) |f(0)|}{2\pi} \left| \int_0^{\arg k_j} \frac{1}{\sqrt{1-z^2(\theta)}} e^{-2n_j \alpha(z(\theta))} d\theta \right|.
\end{equation*}

\n As $\{\Im k_j\}$ is bounded and $\Re k_j \to +\infty$ we deduce that $\arg k_j \to 0$, so that $\Im z (\theta) = (1-\epsilon) \sin \theta$ is small for large $j$, uniformly for $\theta$ between $0$ and $\arg k_j$. Therefore, the above lower bound on $\Re \alpha$ can be used again, yielding the same conclusion: $k_j I_{12} \to 0$. Putting the estimates for $I_{11}$ and $I_{12}$ together, we conclude the proof.

\subsubsection{Analyzing $I_2$ via uniform asymptotics of the Bessel function} \label{SECT I2}

Let $I_2$ be given by \eqref{I1 and I2} and $d=2$. Here we prove the second relation of \eqref{I1 I2 I3 n<k}, i.e. $\pi k_j I_2 \sim \ln(k_j/n_j)$. The definition of $I_2$ in \eqref{I1 and I2} contains two integral terms, the first of which we denote by $I_{21}$ and the second one by $I_{22}$, so that

$$I_2 = I_{21} + I_{22}$$

\n The desired result will follow after showing

\begin{equation} \label{I_21 and I_22}
\pi k_j I_{21} \sim  \ln (k_j / n_j), \hspace{1.15in} k_j I_{22} \to 0.
\end{equation}

\begin{remark}
\normalfont

When $d=3$, the corresponding results read

\begin{equation*}
2n_j k_j I_{21} \to \arctan (\epsilon^2 + 2\epsilon)^{-\frac{1}{2}}, \qquad \qquad n_jk_j I_{22} \to 0.
\end{equation*}

\end{remark}

$\bullet$ We start from the term $I_{21}$:

\begin{equation*}
I_{21} = \int_{(1+\epsilon) \frac{n_j}{|k_j|}}^1 f(t) J_{n_j}^2(n_j z_j(t)) dt,
\qquad \qquad z_j(t) = \frac{k_j}{n_j} t.
\end{equation*}

\n Note that for any $t$ in the integration interval $|z_j(t)|>1+\epsilon$ and $z_j(t)$ lies in the right half-plane. Further, $n_j |\Im z_j(t)| \leq c$ for some $c>0$, as $\{\Im k_j\}$ is bounded. Therefore, we can apply the asymptotic formula \eqref{Bessel unif z>1} of Lemma~\Ref{LEM Bessel unif} for the Bessel function in the above integral, uniformly in $t$:

\begin{equation*}
I_{21} = \frac{2}{\pi n_j} \int_{(1+\epsilon) \frac{n_j}{|k_j|}}^1 \frac{f(t)}{\sqrt{z_j^2(t)-1}}  \left[ \cos^2\left(n_j \beta(z_j(t)) - \tfrac{\pi}{4}\right) + O\left( \tfrac{1}{n_j} \right)   \right] dt,
\end{equation*}

\n where $\beta$ is given by \eqref{alpha beta}. Letting $z = \frac{k_j}{n_j}t$ inside the integral gives

\begin{equation} \label{I_21 asymp}
I_{21} = \frac{2}{\pi k_j} \int_{(1+\epsilon) \frac{k_j}{|k_j|}}^{\frac{k_j}{n_j}} F_j(z) \left[ \cos^2\left(n_j \beta(z) - \tfrac{\pi}{4}\right) + O\left( \tfrac{1}{n_j} \right) \right] dz = \frac{1}{\pi k_j} \left( \CJ_1 + \CJ_2 \right),
\end{equation}

\n where the contour of integration is the line segment connecting the two endpoints in the above integral, and we set

\begin{equation*}
F_j(z) = \frac{f\left( \frac{n_j}{k_j} z \right)}{\sqrt{z^2-1}}
\end{equation*}

\n and

\begin{equation*}
\CJ_1 =  \int_{(1+\epsilon) \frac{k_j}{|k_j|}}^{\frac{k_j}{n_j}}  2 F_j(z) \cos^2\left(n_j \beta(z) - \tfrac{\pi}{4}\right) dz,
\qquad \qquad
\CJ_2 = O\left( \frac{1}{n_j} \right) \cdot  \int_{(1+\epsilon) \frac{k_j}{|k_j|}}^{\frac{k_j}{n_j}} F_j(z) dz.
\end{equation*}

\n We now show that the dominant term comes from $\CJ_1$, and $\CJ_2$ is of higher order. More precisely, as $j\to \infty$

\begin{equation*}
\CJ_1 \sim \ln \frac{k_j}{n_j},
\qquad \qquad
\CJ_2 = o \left( \ln \frac{k_j}{n_j} \right).
\end{equation*}

\n Indeed, as the argument of $f$ in $F_j(z)$ stays bounded, $f$ is also bounded. Hence,

\begin{equation*}
|\CJ_2| \leq \frac{C}{n_j} \int_{(1+\epsilon) \frac{k_j}{|k_j|}}^{\frac{k_j}{n_j}}  \frac{|dz|}{\sqrt{|z^2-1|}}  \leq \frac{C}{n_j}  \int_{(1+\epsilon) \frac{k_j}{|k_j|}}^{\frac{k_j}{n_j}}  \frac{|dz|}{|z|}  =  \frac{C}{n_j} \left( \ln \frac{|k_j|}{n_j} - \ln (1+\epsilon) \right),
\end{equation*}

\n which implies the desired estimate for $\CJ_2$. To analyze $\CJ_1$, we use the double angle formula to rewrite it as

\begin{equation*}
\CJ_1 = \int_{(1+\epsilon) \frac{k_j}{|k_j|}}^{\frac{k_j}{n_j}} F_j(z) dz + \int_{(1+\epsilon) \frac{k_j}{|k_j|}}^{\frac{k_j}{n_j}} F_j(z) \sin \left( 2 n_j \beta(z) \right) dz =: \CJ_{11} + \CJ_{12}.
\end{equation*}

\n Let us show that the dominant term is $\CJ_{11}$, and $\CJ_{12}$ is of lower order. Using the definition of $f$ from \eqref{f}:

\begin{equation} \label{J_11}
\begin{split}
\CJ_{11} =& \int_{(1+\epsilon) \frac{k_j}{|k_j|}}^{\frac{k_j}{n_j}} \frac{dz}{\sqrt{z^2-1}} + \frac{n_j}{k_j} \int_{(1+\epsilon) \frac{k_j}{|k_j|}}^{\frac{k_j}{n_j}} \frac{z}{\sqrt{z^2-1}} dz = 
\\
=& \int_{(1+\epsilon) \frac{k_j}{|k_j|}}^{\frac{k_j}{n_j}} \frac{dz}{z} 
+
\int_{(1+\epsilon) \frac{k_j}{|k_j|}}^{\frac{k_j}{n_j}} \frac{dz}{z\sqrt{z^2-1} \left(z + \sqrt{z^2-1} \right)} + \frac{n_j}{k_j} \int_{(1+\epsilon) \frac{k_j}{|k_j|}}^{\frac{k_j}{n_j}} \frac{z}{\sqrt{z^2-1}} dz,
\end{split}
\end{equation}

\n where in the last step we have added and subtracted $1/z$ in the first integral. It is now evident that

\begin{equation*}
\CJ_{11} \sim \ln (k_j/n_j).
\end{equation*}

\n Indeed, e.g. $|z + \sqrt{z^2-1}|$ is bounded from below and up to a constant, the modulus of the integrand in the second integral of \eqref{J_11} can be bounded by $1/|z|^2$. Similarly, the third integral in \eqref{J_11} can be estimated. 

Finally, we show that $\CJ_{12} \to 0$. To that end we integrate by parts and use the fact that $\beta'(z) = \sqrt{z^2-1}/z$:

\begin{equation*}
\begin{split}
-2n_j \CJ_{12} &= \int_{(1+\epsilon) \frac{k_j}{|k_j|}}^{\frac{k_j}{n_j}} \frac{z f\left( \frac{n_j}{k_j} z \right)}{z^2-1} \left[ \cos \left(2n_j \beta(z)\right) \right]' dz = \frac{\frac{k_j}{n_j} f(1) \cos \left(2n_j \beta(\frac{k_j}{n_j})\right)}{\left(\frac{k_j}{n_j} \right)^2-1} -
\\[.1in]
&-
\frac{(1+\epsilon)\frac{k_j}{|k_j|} f\left( (1+\epsilon)\frac{n_j}{|k_j|} \right) \cos (2n_j \beta((1+\epsilon)\frac{k_j}{|k_j|}))}{\left((1+\epsilon)\frac{k_j}{|k_j|} \right)^2-1}
+
\int_{(1+\epsilon) \frac{k_j}{|k_j|}}^{\frac{k_j}{n_j}} \frac{z^2 + 2\frac{n_j}{k_j}z + 1}{\left(z^2-1\right)^2} \cos \left(2n_j \beta(z)\right) dz
\end{split}
\end{equation*}

\n Next we need the following bound on the imaginary part of $\beta$ (see part $(i)$ of Lemma~\Ref{LEM alpha beta prop} in the Appendix):

\begin{equation*}
n_j |\Im \beta(z)| \leq c n_j |\Im z| \leq c,
\end{equation*}

\n for some $c>0$ and all $z$ lying on the integration line segment connecting $(1+\epsilon) \frac{k_j}{|k_j|}$ to $\frac{k_j}{n_j}$. Consequently, all the cosine terms in the above formula are bounded and hence the right hand side of the equation for $\CJ_{12}$ is bounded (for all $j$).

$\bullet$ Let us now turn to the analysis of $I_{22}$:

\begin{equation*}
I_{22} = i\frac{n_j}{k_j} (1+\epsilon) \int_0^{\arg k_j} f \left( \frac{n_j}{k_j} z(\theta) \right) J_{n_j}^2 \left(n_j z(\theta)\right) e^{i \theta} d \theta, \qquad \qquad z(\theta) = (1-\epsilon) e^{i \theta}.
\end{equation*}

\n Note that, for some $c>0$ and all $\theta$ in the integration interval

\begin{equation*}
n_j |\Im z(\theta)| \leq c n_j |\sin \theta| \leq c n_j |\theta| \leq c n_j |\arg k_j| = c n_j \arctan \frac{|\Im k_j|}{\Re k_j} \leq c \frac{n_j}{\Re k_j}. 
\end{equation*}

\n The last term in the above inequality tends to zero as $j \to \infty$, since $n_j \ll |k_j|$. However, we only need its boundedness to apply Lemma~\Ref{LEM Bessel unif}. Doing so, we replace the Bessel function with its asymptotic form \eqref{Bessel unif z>1}  (which holds uniformly in $\theta$) and use that the argument of $f$ converges to $0$ uniformly in $\theta$, so that $f$ can be replaced with $f(0)$. Thus, in the large $j$ asymptotics:

\begin{equation} \label{I_22 asymp}
\pi k_j I_{22} \sim 2i(1+\epsilon) f(0) \int_0^{\arg k_j} \frac{e^{i\theta}}{\sqrt{z^2(\theta)-1}} \left[ \cos^2\left(n_j \beta(z(\theta)) - \tfrac{\pi}{4}\right) + O\left( \tfrac{1}{n_j} \right) \right] d\theta. 
\end{equation}

\n As above, the cosine term is bounded uniformly in $j$ and $z$. Hence, the integrand in \eqref{I_22 asymp} can be bounded by a constant, and as $\arg k_j \to 0$ we conclude that $k_j I_{22} \to 0$.

\subsubsection{Analyzing $I_3$ via Mellin transform} \label{SECT I3}

We start by stating the main result of this section, which proves the third relation of \eqref{I1 I2 I3 n<k}. 

\begin{lemma} \label{LEM I3}
Let $I_3 = I_3(n_j, k_j, \epsilon)$ be given by \eqref{I3}, $c \in (0,1)$ and $\epsilon \in (0,1)$ then, as $j \to \infty$ 

\begin{equation} \label{kI3 asymp}
k_j I_3(n_j, k_j, \epsilon) \to \frac{1}{2\pi i} \int_{c+i\RR} 2^{z-1} \frac{\Gamma(1-z)}{\Gamma^2(1-\frac{z}{2})}  \left[ \frac{(1+\epsilon)^{1-z} - (1-\epsilon)^{1-z}}{1-z} \right] dz, 
\end{equation}

\n for $d=2$, where $\Gamma$ denotes the Gamma function. And for $d=3$ (cf. Remark~\ref{REM case 1 d=3})

\begin{equation} \label{kI3 asymp 3d}
n_j k_j I_3(n_j, k_j, \epsilon) \to - \frac{1}{4i} \int_{c+i\RR} 2^{z-1} \frac{\Gamma(1-z)}{\Gamma^2(1-\frac{z}{2})}  \left[ \frac{(1+\epsilon)^{-z} - (1-\epsilon)^{-z}}{z} \right] dz.
\end{equation}

\n In particular, upon applying the dominated convergence theorem

\begin{equation} \label{kI3 asymp eps 0}
\begin{split}
\lim_{\epsilon \to 0} \lim_{j \to \infty} k_j I_3(n_j, k_j, \epsilon) &= 0, \qquad \qquad d=2
\\
\lim_{\epsilon \to 0} \lim_{j \to \infty} n_j k_j I_3(n_j, k_j, \epsilon) &= 0, \qquad \qquad d=3.
\end{split}
\end{equation}

\end{lemma}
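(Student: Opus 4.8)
The plan is to evaluate $I_3 = \frac{\nu_j}{k_j}\int_{1-\epsilon}^{1+\epsilon} f\!\left(\tfrac{\nu_j}{k_j}t\right) J_{n_j}^2(\nu_j t)\,dt$ asymptotically as $j\to\infty$ using the Mellin transform. First I would write $f(\tfrac{\nu_j}{k_j}t) = 1 + \tfrac{\nu_j}{k_j}t$ (for $d=2$; the square for $d=3$) and note that the factor $\tfrac{\nu_j}{k_j}$ is small since $n_j\ll|k_j|$ by assumption; hence the dominant contribution comes from replacing $f(\tfrac{\nu_j}{k_j}t)$ by $f(0)=1$, with the remaining terms giving lower-order corrections that I would estimate separately. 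Next I would use the classical Mellin-transform representation of $J_\nu^2(\nu t)$ over $t\in(0,\infty)$ — namely $J_\nu^2(\nu t) = \frac{1}{2\pi i}\int_{c+i\RR} \frac{\Gamma(1-z)}{\Gamma^2(1-\frac{z}{2})} \frac{(2/\nu)^{z}}{2} (\nu t)^{\text{?}}\,dz$ type formula, or more precisely the known formula for the Mellin transform of $J_\nu^2$ with respect to its argument — to express $J_{n_j}^2(\nu_j t)$ as a contour integral in which the $\nu_j$-dependence is isolated in a factor $\nu_j^{-z}$ (or similar). Substituting this into $I_3$, swapping the order of integration (justified by absolute convergence on a suitable vertical line $\Re z = c\in(0,1)$), and performing the elementary $t$-integral $\int_{1-\epsilon}^{1+\epsilon} t^{1-z}\frac{dt}{t}$ type integral yields the bracketed factor $\frac{(1+\epsilon)^{1-z}-(1-\epsilon)^{1-z}}{1-z}$, after which the $\nu_j$-dependent prefactors must cancel against the $k_j/\nu_j$ and $(2/\nu_j)^z$ factors to leave precisely the $j$-independent contour integral on the right-hand side of \eqref{kI3 asymp}.

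The key steps in order are: (1) reduce to $f\equiv f(0)$ modulo controlled error; (2) insert the Mellin–Barnes representation of $J_{n_j}^2(\nu_j t)$; (3) justify Fubini on a vertical contour $\Re z = c$, which requires the Stirling-type decay of $\Gamma(1-z)/\Gamma^2(1-\tfrac z2)$ along vertical lines to beat the at-most-polynomial growth of the $t$-integral factor; (4) carry out the $t$-integral explicitly; (5) collect the powers of $\nu_j$ and $k_j$ — using $\nu_j = n_j$ (resp. $n_j+\tfrac12$) and $\nu_j/k_j\to 0$ — and verify that, after multiplying by $k_j$ (resp. $n_jk_j$ for $d=3$), the limit is exactly the claimed contour integral; (6) handle the $d=3$ case via \eqref{j_n via J_n}, which introduces the extra $\sqrt{\pi/2z}$ factors and shifts the relevant exponent, producing the $(1+\epsilon)^{-z}-(1-\epsilon)^{-z}$ form and the $-\tfrac{1}{4i}$ constant; and (7) deduce \eqref{kI3 asymp eps 0} by pulling the $\epsilon\to 0$ limit inside the contour integral via dominated convergence, noting the bracketed factor tends to $0$ pointwise in $z$ and is dominated uniformly in $\epsilon\in(0,1)$ by an integrable majorant on the line $\Re z = c$.

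The main obstacle I expect is the rigorous justification of the Mellin-transform manipulation in a regime where the argument $\nu_j t$ of the Bessel function is large and comparable to its order $\nu_j$ (since $t$ ranges over $[1-\epsilon,1+\epsilon]$, we have $\nu_j t \asymp \nu_j$): one is near the turning point, the Mellin–Barnes integral does not converge in the naive sense, and the interchange of limits (letting $j\to\infty$ while the contour integral stays fixed) must be controlled carefully. Concretely, the subtlety is that for the formula to pass to the limit one needs uniform-in-$j$ bounds on the tails of the $z$-contour integral, which in turn rests on the precise asymptotic behavior of $\Gamma(1-z)/\Gamma^2(1-\tfrac z2)$ as $|\Im z|\to\infty$ and on moving the contour to a line where the integrand is absolutely integrable but no poles (at $z=1,2,3,\dots$ from $\Gamma(1-z)$, and at $z=1$ from the bracket — which is in fact removable) are crossed. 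A secondary technical point is showing the $f$-reduction error in step (1) is genuinely of lower order after multiplication by $k_j$; this should follow from the same Mellin analysis applied to the correction term, which carries an extra factor $\nu_j/k_j\to 0$.
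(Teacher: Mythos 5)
Your overall strategy matches the paper's: reduce $f(\nu_j t/k_j)$ to $f(0)=1$, rewrite $I_3^0=\int_{1-\epsilon}^{1+\epsilon}J_{n_j}^2(\nu_jt)\,dt$ via the Mellin transform (the paper uses the Parseval form $I(\xi)=\frac{1}{2\pi i}\int_{c+i\RR}\xi^{-z}\CM[J_\nu^2](z)\CM[\varphi](1-z)\,dz$, which is equivalent to inserting the Mellin--Barnes representation and swapping integrals), and then pass to the limit by dominated convergence. However there is a genuine gap at the heart of your argument. You assert that the $\nu_j$-dependence of the Mellin--Barnes representation of $J_{n_j}^2(\nu_j t)$ ``is isolated in a factor $\nu_j^{-z}$ (or similar),'' and your sketch formula contains only $\Gamma(1-z)/\Gamma^2(1-\tfrac z2)$. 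This is incorrect: the Mellin transform is
\[
\CM[J_\nu^2](z)=2^{z-1}\,\frac{\Gamma\!\left(\nu+\tfrac z2\right)\Gamma(1-z)}{\Gamma^2\!\left(1-\tfrac z2\right)\Gamma\!\left(1+\nu-\tfrac z2\right)},
\]
so the $\nu$-dependence also sits inside the ratio $\Gamma(\nu+\tfrac z2)/\Gamma(1+\nu-\tfrac z2)$ and is not a clean power of $\nu$. The essential, nontrivial step that your plan omits is precisely the treatment of this ratio: Stirling's formula shows that $\nu^{1-z}\,\Gamma(\nu+\tfrac z2)/\Gamma(1+\nu-\tfrac z2)\to 1$ as $\nu\to\infty$ for each fixed $z$ on the line $\Re z=c$, and one then needs a bound on this expression, \emph{uniform in $\nu$ and $z\in c+i\RR$}, of the form $|F_n(z)|\leq C|z|^{-3/2}$ in order to apply dominated convergence. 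Establishing that bound requires tracking the argument terms $p(z)$ and $q_n(z)$ coming from the Gamma asymptotics and showing they are nonpositive, together with the monotonicity in $n$ of the modulus ratio; none of this is visible in your step (5), which cannot be carried out as a simple ``collection of powers.''

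A secondary issue: you identify the ``main obstacle'' as the turning-point behavior making the Mellin--Barnes integral ``not converge in the naive sense.'' In fact the Mellin integral converges absolutely on the line $\Re z=c$ for any fixed $\nu$, since $\CM[J_\nu^2](x+iy)=O(|y|^{x-3/2})$ and $\CM[\varphi](1-z)=O(|z|^{-1})$, giving integrable decay for $c\in(0,1)$. The real difficulty is the one described above --- uniform-in-$j$ control of the $\nu$-dependent Gamma ratio --- and your statement that the uniform bound ``rests on the precise asymptotic behavior of $\Gamma(1-z)/\Gamma^2(1-\tfrac z2)$'' attributes the obstacle to the $\nu$-independent part of the integrand, which is not where the trouble lies. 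Your final step (7) (dominated convergence in $\epsilon$) is correct and matches the paper, and your reduction in step (1) is fine.
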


Let $\nu_j$ be given by \eqref{nu_j}. We start with the observation that $f \left( \frac{\nu_j}{k_j} t \right) \sim f(0) = 1$ uniformly in $t \in (1-\epsilon, 1+\epsilon)$, due to the assumption $n_j \ll |k_j|$. Therefore, the leading behavior of $I_3$ simplifies to 

\begin{equation} \label{I_3 leading term}
I_3 \sim \frac{\nu_j}{k_j} I_3^0,
\qquad \qquad I_3^0 = \int_{1-\epsilon}^{1+\epsilon} J_{n_j}^2 (\nu_j t) dt.
\end{equation}

\n The above definition of $I_3^0$ is for $d=2$. When $d=3$, $J_n$ must be replaced with $j_n$. To analyze the integral $I_3^0$ we use Mellin transform. The Mellin transform \cite{wong01} of a locally integrable function $\varphi$ on $(0,\infty)$, is defined by 

\begin{equation*}
\CM [\varphi] (z) = \int_0^\infty t^{z-1} \varphi(t) dt
\end{equation*}

\n for those values $z \in \mathbb{C}$ for which the above integral makes sense. Typically, the Mellin transform defines an analytic function in some vertical strip in the complex plane. Note that

\begin{equation} \label{I3 Mellin form}
\begin{split}
I_3^0 &= \int_0^\infty \varphi(t) J_{n_j}^2(n_j t) dt, \qquad \quad d=2
\\[.1in]
\frac{2k_j}{\pi} I_3^0 &= \int_0^\infty \varphi(t) J_{\nu_j}^2(\nu_j t) dt, \qquad \quad d=3,
\end{split}
\qquad \qquad
\varphi(t) = \frac{1}{t^{d-2}} \chi_{(1-\epsilon,1+\epsilon)}(t).
\end{equation}

\n So the question is reduced to considering the function

\begin{equation*}
I(\xi) = \int_0^\infty \varphi(t) J_{\nu}^2(\xi t) dt,
\qquad \qquad
\nu = \nu(n) = 
\begin{cases}
n, \qquad \qquad &d=2
\\
n+\frac{1}{2}, &d=3
\end{cases}
\end{equation*}

\n and analyzing the asymptotics of $I(\nu)$, where we suppressed the subscript $j$ from the notation. This convention will be followed throughout this section. Taking the Mellin transform of $I(\xi)$ with respect to $\xi$ and applying the inverse Mellin transform we obtain the representation

\begin{equation} \label{I Mellin}
I(\xi) = \frac{1}{2\pi i} \int_{c+i\RR} \xi^{-z} \CM [J_{\nu}^2](z) \CM[\varphi](1-z) dz,
\end{equation}

\n where it is assumed that the functions $\CM [J_{\nu}^2](z)$ and $\CM[\varphi](1-z)$ have a common strip of analyticity in the complex plane and $c+i\RR$ is a vertical line lying inside this strip. The representation \eqref{I Mellin} is also known as the Parseval formula \cite{wong01}.

\begin{remark}[Idea of the Mellin transform technique] \mbox{}
\normalfont

\n If $\CM [J_{\nu}^2](z)$ and $\CM[\varphi](1-z)$ can be analytically continued to meromorphic functions in a right half-plane and the contour of integration $c+i\RR$ can be shifted to the right, then the residues that are picked up in this process give the asymptotic expansion of $I(\xi)$ as $\xi \to \infty$ (for fixed $\nu$). This procedure, however, has limitations in our case as the integrand does not have sufficient decay to allow shifting the contour beyond the second pole. More importantly, the quantity of interest is $I(\nu)$, i.e. $\nu$-dependence occurs not only in the term $\CM [J_{\nu}^2]$, but also in $\nu^{-z}$. It turns out that if we shift the contour beyond the first pole, the residue does not give the dominant term and the shifted integral is of the same order as the residue.
\end{remark}

Direct calculation shows

\begin{equation*}
\CM[\varphi](1-z) = 
\begin{cases}
\displaystyle \frac{(1+\epsilon)^{1-z} - (1-\epsilon)^{1-z}}{1-z}, \qquad \qquad &d=2
\\[.2in]
\displaystyle - \frac{(1+\epsilon)^{-z} - (1-\epsilon)^{-z}}{z}, & d=3,
\end{cases}
\end{equation*}

\n which is an entire function of $z \in \mathbb{C}$. Next, for any real number $\nu \geq 0$ (cf. \cite{ober74})

\begin{equation} \label{J^2 Mellin}
\CM [J_\nu^2] (z) = 2^{z-1} \frac{\Gamma(\nu+\frac{z}{2}) \Gamma(1-z)}{\Gamma^2(1-\frac{z}{2}) \Gamma(1+\nu-\frac{z}{2})},
\qquad \qquad
-2\nu<\Re z < 1,
\end{equation}

\n which is a meromorphic function in $\mathbb{C}$ and \eqref{I Mellin} holds for any $c \in (-2\nu, 1)$. 

\begin{remark} \label{REM Mellin complex var}
\normalfont
Originally, we started with an integral (cf. \eqref{middle integral change of var}) where in $J_\nu^2(n \xi)$ the variable $\xi$ was complex. The Mellin transform approach cannot be applied in this case. Indeed, $\xi^{-z}$ is exponentially growing on $c+i\RR$ for complex $\xi$ and the integral \eqref{I Mellin} makes sense only if $\CM[\varphi](1-z)$ or $\CM [J_\nu^2] (z)$ is exponentially decaying. However, this is not the case as the well-known asymptotic formulas for the Gamma function imply that, as $|y| \to \infty$, uniformly for bounded $x$

\begin{equation} \label{MJ algebraic decay}
|\CM [J_\nu^2](x+iy)| = O \left( |y|^{x-\frac{3}{2}} \right).
\end{equation}

\n That is, $\CM [J_\nu^2]$ has only algebraic decay on the line $c+i\RR$ (for fixed $\nu$), which is due to the oscillatory behavior of $J_\nu^2$ for large arguments.
\end{remark}

\n With these preliminaries we are ready to prove Lemma~\Ref{LEM I3}, in the case $d=2$ (the case $d=3$ follows analogously).

\begin{proof}[Proof of Lemma~\Ref{LEM I3}]
In \eqref{I Mellin} let $c \in (0,1)$. Throughout the proof $C>0$ denotes an absolute constant that may change from one line to another.

\begin{equation*}
nI(n) = \frac{1}{2\pi i} \int_{c+i\RR} n^{1-z} \CM [J_{n}^2](z) \CM[\varphi](1-z)dz.
\end{equation*}

\n Our goal is to show that we can take limits as $n\to \infty$ in the above formula. We use the following asymptotic formula for the Gamma function (cf. \cite{CC14}):

\begin{equation} \label{Gamma asymp}
\Gamma(\zeta) \sim \sqrt{2\pi} e^{(\zeta - \frac{1}{2}) \ln (\zeta) - \zeta }
\end{equation}

\n uniformly, as $\zeta \to \infty$ in the sector $|\arg \zeta| \leq \pi - \delta$, where $\delta>0$ is a small number. Note that $n+\frac{z}{2}$ and $1+n-\frac{z}{2}$ have large modulus for large $n$, uniformly for $z \in c+i\RR$. Hence, for large enough $n$ we can use the above asymptotics to conclude that, as $n \to \infty$

\begin{equation*}
\begin{split}
n^{1-z} \frac{\Gamma(n+\frac{z}{2})}{\Gamma(1+n-\frac{z}{2})} \ \sim \ 
\ & e^{1-z} \exp\left\{ \left( n + \frac{z-1}{2} \right) \ln \left( 1 + \frac{z}{2n} \right) - \left( n - \frac{z-1}{2} \right) \ln \left( 1 + \frac{2-z}{2n} \right) \right\} =: 
\\
=:& h_n(z) 
\end{split}
\end{equation*}

\n and the asymptotics is uniform in $z \in c + i\RR$, therefore in view of \eqref{J^2 Mellin}

\begin{equation} \label{I31 F1}
nI(n) \sim  \int_{c+i\RR} F_n(z) dz,
\qquad \qquad
F_n(z) = \frac{1}{2\pi i} h_n(z) 2^{z-1} \frac{\Gamma(1-z)}{\Gamma^2(1-\frac{z}{2})}  \CM[\varphi](1-z).
\end{equation}

\n Since $h_n(z) \to 1$, as $n \to \infty$ for any fixed $z$, $F_n(z)$ has a pointwise limit. To conclude the proof it remains to show that we can put the limit, as $n \to \infty$, inside the integral of $F_n$ in \eqref{I31 F1}. This can be done via dominated convergence once we prove the bound

\begin{equation} \label{F1 bound}
|F_n(z)| \leq \frac{C}{|z|^{1.5}}, \qquad \qquad \forall z \in c+i\RR, \quad \forall n \ \text{large}.
\end{equation}

\n The remaining part of the proof is dedicated to establishing this bound. From now on let us always assume $n\geq 1$ and $z=c+iy$ with $y \in \RR$. Note that

\begin{equation} \label{Mphi bound}
|\CM[\varphi](1-z)| \leq \frac{C}{|z|}.
\end{equation}

\n Further, in view of \eqref{Gamma asymp}, as $|y| \to \infty$

\begin{equation} \label{Gamma ratio asymp}
\left| \frac{\Gamma(1-z)}{\Gamma^2(1-\frac{z}{2})} \right| \sim e \frac{|1-\frac{z}{2}|^{c-1}}{|1-z|^{c-\frac{1}{2}}} e^{p(z)},
\qquad \qquad
p(z) = y \left[\arg(1-z) - \arg\left(1-\tfrac{z}{2}\right)\right].
\end{equation}

\n Using this relation, there exists a constant $C>0$, such that

\begin{equation} \label{Gamma ratio bound}
\left| \frac{\Gamma(1-z)}{\Gamma^2(1-\frac{z}{2})} \right| \leq C \frac{|1-\frac{z}{2}|^{c-1}}{|1-z|^{c-\frac{1}{2}}} e^{p(z)} \leq  \frac{C}{|z|^{\frac{1}{2}}} e^{p(z)}
\end{equation}

\n  for all $z= c+iy$. Let us rewrite

\begin{equation} \label{h_n modulus}
|h_n(z)| = e^{1-c} |z|^{c-1} \left|\frac{1}{z}+\frac{1}{2n}\right|^{\frac{c-1}{2}} \left|\frac{1}{z}+\frac{2-z}{2nz}\right|^{\frac{c-1}{2}} \left| \frac{n+\tfrac{z}{2}}{n+\tfrac{2-z}{2}} \right|^n e^{q_n(z)},  
\end{equation}

\n where

\begin{equation*}
q_n(z) = -\frac{y}{2} \left[ \arg(1+\tfrac{z}{2n}) + \arg(1+\tfrac{2-z}{2n})  \right].
\end{equation*}

\n Using the basic estimates

\begin{equation*}
\left|\frac{1}{z}+\frac{1}{2n}\right| \geq \frac{1}{|z|}, \qquad \qquad
\left|\frac{1}{z}+\frac{2-z}{2nz}\right| \geq \frac{1}{|z|}
\end{equation*}

\n in \eqref{h_n modulus} we arrive at

\begin{equation} \label{h_n bound}
|h_n(z)| \leq C \left| \frac{n+\tfrac{z}{2}}{n+\tfrac{2-z}{2}} \right|^n e^{q_n(z)}.  
\end{equation}

\n Combining the bounds \eqref{Mphi bound}, \eqref{Gamma ratio bound} and \eqref{h_n bound} we deduce the bound

\begin{equation*}
|F_n(z)| \leq \frac{C}{|z|^{1.5}} \left| \frac{n+\tfrac{z}{2}}{n+\tfrac{2-z}{2}} \right|^n e^{p(z)+ q_n(z)}.
\end{equation*}

\n Let us show that $q_n(z)\leq 0$. Direct calculation gives

\begin{equation*}
\frac{d}{dn} q_n(z) = \frac{(1-c)4y^2 (2n+1)}{\left[ (2n+c)^2 + y^2 \right] \left[4n^2 + y^2 + 4n(2-c) + (2-c)^2 \right]} \geq 0,
\end{equation*}

\n therefore $q_n(z)$ is increasing in $n$ (for any fixed $z$), hence it is bounded by its limit as $n\to\infty$, which is equal to $0$. Next, $p(z) \leq 0$ for all $z =c+iy$. Finally,

\begin{equation*}
\left| \frac{n+\tfrac{z}{2}}{n+\tfrac{2-z}{2}} \right|^n = \left(1 - \frac{(1-c)(2n+1)}{(n+\tfrac{2-c}{2})^2 + \frac{y^2}{4}} \right)^\frac{n}{2} \leq 1.
\end{equation*}

\n Putting all these bounds together we obtain \eqref{F1 bound}.
\end{proof}

\subsection{Case III: the regime $k_j / n_j \to L$ with $L>0$} \label{SECT n comparable k}

\subsubsection{The case $L<1$} \label{SECT 3a}

Recall that

\begin{equation*}
\CB_{n_j}(k_j) = \int_0^{1} f(t) J_{n_j}^2(n_j z_j(t)) dt, \qquad \qquad z_j(t)= \frac{k_j}{n_j} t.
\end{equation*}

\n Since $L<1$, there exists a small $\epsilon>0$ such that for $j$ large enough

\begin{equation*}
|z_j(t)| \leq \frac{|k_j|}{n_j} < 1-\epsilon
\end{equation*}

\n uniformly for $t \in (0,1)$. Therefore, using \eqref{Bessel unif z<1} of Lemma~\Ref{LEM Bessel unif} we get

\begin{equation*}
\CB_{n_j}(k_j) \sim \frac{1}{2\pi n_j} \int_0^{1} \frac{f(t)}{\sqrt{1-z_j(t)^2}} e^{-2n_j \alpha(z_j(t))} dt.
\end{equation*}

\n Analogously to Section~\Ref{SECT case 2}, using that $z_j(t) \to Lt$ uniformly in $t$, we find  

\begin{equation*}
\left(n_j!\right)^2 \left( \frac{2}{k_j} \right)^{2n_j} \CB_{n_j}(k_j) \sim  \int_0^{1} \frac{f(t)}{\sqrt{1-L^2t^2}} t^{2n_j} e^{-2n_j \Re \widetilde{\alpha}(z_j(t))} dt,
\end{equation*}

\n where $\widetilde{\alpha}$ is defined by \eqref{alpha tilde}. The rest of the argument follows as in Section~\Ref{SECT case 2} and gives a lower bound of form \eqref{EB ineq} leading to a contradiction.

\subsubsection{The case $L>1$} \label{SECT 3c}

Let us fix a small $\epsilon>0$, such that $L>1+\epsilon$. Splitting the integral defining $\CB_{n_j}(k_j)$ similarly as in Section~\ref{SECT n<k} and deforming the contour of integration, we arrive at the following representation:

\begin{equation*}
\CB_{n_j}(k_j) = I_1 + I_2 + I_3,
\end{equation*}

\n where (we use the same letters $I_1,I_2,I_3$ for the integrals below, but these should not be confused with the integrals from Section~\ref{SECT n<k})

\begin{equation} \label{I1 and I2 case 3}
\begin{split}
I_1 &= \int_0^{(1-\epsilon) \frac{1}{L}} f(t) J_{n_j}^2(k_jt) dt + \frac{n_j}{k_j} \int_{(1-\epsilon) \frac{k_j}{n_j L}}^{1-\epsilon} f \left( \frac{n_j}{k_j} z \right) J_{n_j}^2 \left(n_j z\right) d z,
\\[.1in]
I_2 &= \int_{(1+\epsilon) \frac{1}{L}}^1 f(t) J_{n_j}^2(k_jt) dt + \frac{n_j}{k_j} \int_{1+\epsilon}^{(1+\epsilon) \frac{k_j}{n_j L}} f \left( \frac{n_j}{k_j} z \right) J_{n_j}^2 \left(n_j z\right) d z.
\end{split}
\end{equation}

\n In the $z$ integrals above we take the contour of integration to be the line segment connecting the two endpoints. Finally,

\begin{equation} \label{I3 case 3}
I_3 = \frac{\nu_j}{k_j} \int_{1-\epsilon}^{1+\epsilon} f \left( \frac{\nu_j}{k_j} t \right) J_{n_j}^2 (\nu_j t) dt \sim \frac{1}{L} \int_{1-\epsilon}^{1+\epsilon} f \left( \tfrac{t}{L}\right) J_{n_j}^2 (n_j t) dt .
\end{equation}

\n Similarly to Lemma~\Ref{LEM I3}, using Mellin transforms, it is straightforward to obtain the analogue of the equation \eqref{kI3 asymp eps 0} for the above integral $I_3$, i.e.

\begin{equation*}
\lim_{\epsilon \to 0}\lim_{j \to \infty} k_j I_3 = 0.
\end{equation*}

\n The term $I_1$ can be treated the same way as its analogue in Section~\Ref{SECT I1}, giving

\begin{equation*}
\lim_{j \to \infty} k_j I_1 = 0.
\end{equation*}

\n Finally, $I_2$ is analogous to the integral dealt with in Section~\Ref{SECT I2}. The asymptotic behavior, however, is different as now $k_j/n_j$ does not approach to zero and the term $k_j I_2$ does not have logarithmic singularity, instead we obtain

\begin{equation*}
\lim_{j \to \infty} k_j I_2 = \frac{1}{\pi} \int_{1+\epsilon}^L \frac{f\left( \tfrac{t}{L}\right)}{\sqrt{t^2-1}} dt.
\end{equation*}

\n Combining the above results we obtain 

\begin{equation*}
\lim_{j \to \infty} k_j \CB_{n_j}(k_j) = \frac{1}{\pi} \int_1^L \frac{f\left( \tfrac{t}{L}\right)}{\sqrt{t^2-1}} dt. 
\end{equation*}

\n This contradicts the fact that $k_j$ are zeros of $\CB_{n_j}$.

\begin{remark}
\normalfont
In the case $d=3$ the analogous result reads:

\begin{equation*}
\lim_{j \to \infty} n_j k_j \CB_{n_j}(k_j) = \frac{1}{2} \int_1^L \frac{f\left( \tfrac{t}{L}\right)}{t \sqrt{t^2-1}} dt .
\end{equation*}   
\end{remark}

\subsubsection{The case $L=1$} \label{SECT Case 3b}

\n This case is delicate, as the behavior of $\CB_{n_j}(k_j)$ depends on the rate of convergence of $k_j/n_j$ to 1, and whether it approaches 1 from the left, or from the right side of the line $\{\Re z = 1\}$. We start by changing the variables $z = \frac{k_j}{n_j} t$ to write

\begin{equation*}
\CB_{n_j}(k_j) = \frac{n_j}{k_j} \int_0^{\frac{k_j}{n_j}} f \left( \frac{n_j}{k_j} z \right) J_{n_j}^2 (n_j z) dz \sim \int_0^{\frac{k_j}{n_j}} f(z) J_{n_j}^2 (n_j z) dz, 
\end{equation*}

\n where we used that $f\left( \frac{n_j}{k_j} z \right) \sim f(z)$ as $j\to \infty$ uniformly for $z$ bounded and inside the right half-plane. Choosing the contour of integration to be the horizontal line segment $(0, \frac{\Re k_j}{n_j})$ followed by the vertical line segment connecting $\frac{\Re k_j}{n_j}$ to $\frac{k_j}{n_j}$ and changing the variables in the latter integral we obtain

\begin{equation*}
\CB_{n_j}(k_j) \sim \CJ_1 + \CJ_2,
\end{equation*}

\n where

\begin{equation*}
\CJ_1 = \int_0^{\frac{\Re k_j}{n_j}} f(z) J_{n_j}^2 (n_j z) dz,
\qquad\qquad
\CJ_2 = \frac{i}{n_j} \int_0^{\Im k_j} f\left( \frac{\Re k_j}{n_j} + i\frac{t}{n_j} \right) J_{n_j}^2 (\Re k_j + i t) dt.
\end{equation*}

\n Using that $f$ appearing in $\CJ_2$ is asymptotically equivalent to $f(1)=2$ uniformly in $t$ we find

\begin{equation} \label{J_2 case 3 asymp}
\CJ_2 \sim \frac{2i}{n_j} \int_0^{\Im k_j} J_{n_j}^2 (\Re k_j + i t) dt. 
\end{equation}

\n We remark that the Mellin transform approach does not yield the leading asymptotic behavior of $\CJ_1$. Indeed, the analogue of Lemma~\Ref{LEM I3} applied to this integral shows that

\begin{equation*} 
n_j \CJ_1 \to \frac{1}{2\pi i} \int_{c+i\RR} 2^{z-1} \frac{\Gamma(1-z)}{\Gamma^2(1-\frac{z}{2})}  \left[ \frac{1}{2-z} + \frac{1}{1-z}  \right] dz,
\end{equation*}

\n for $c\in(0,1)$. However, the right hand side of the above limit is 0, unlike the right hand sides of \eqref{kI3 asymp} and \eqref{kI3 asymp 3d}. Indeed, the integrand is an analytic function in the half-plane $\{\Re z < 1\}$ and converges to zero as $c \to -\infty$. The dominated convergence can be used to shift the contour $c+i\RR$ to $c \to -\infty$, implying that the integral is 0. Thus, we conclude that $n_j \CJ_1 \to 0$, which does not capture the leading behavior of $\CJ_1$.

The behavior of $\CJ_1$ and $\CJ_2$, in fact depends on the behavior of the sequence

\begin{equation*}
R_j := n_j^\frac{2}{3} \left( 1 - \frac{\Re k_j}{n_j} \right).
\end{equation*}

\vspace{.1in}
\n \textbf{Assume that} \ $\bm{\{R_j\}}$ \ \textbf{is bounded}, upon passing to a subsequence, which we do not relabel

\begin{equation} \label{R_j to R_infty}
R_j \to R_\infty,
\end{equation}

\n for some $R_\infty \in \RR$. Let us show that, with $\Ai$ denoting the Airy function,

\begin{equation} \label{B case 3b}
\CB_{n_j}(k_j) \sim n_j^{-\frac{4}{3}} \CJ_\infty,
\qquad \qquad
\CJ_\infty := \sqrt[3]{2} f(1) \int_{\sqrt[3]{2} R_\infty}^\infty \Ai^2(x) dx.
\end{equation}

\n This provides the desired contradiction.

\begin{remark}
\normalfont

When $d=3$, the analogous result reads

\begin{equation*}
 \CB_{n_j}(k_j) \sim \frac{\pi}{2} n_j^{-\frac{7}{3}} \CJ_\infty. 
\end{equation*}

\end{remark}

\vspace{.1in}

The conclusion \eqref{B case 3b} will follow after establishing

\begin{equation*}
n_j^\frac{4}{3} \CJ_1 \longrightarrow \CJ_\infty,
\qquad \qquad
n_j^\frac{4}{3} \CJ_2 \longrightarrow 0.
\end{equation*}

\n We start from the second assertion. In view of \eqref{J_2 case 3 asymp}, it is enough to prove that

\begin{equation} \label{J_2 n_j^1/3 to 0 case 3}
n_j^\frac{1}{3} \int_0^{\Im k_j} J_{n_j}^2 \left(n_j z_j(t) \right) dt \longrightarrow 0,
\qquad \qquad
z_j(t)=\frac{\Re k_j}{n_j} + i \frac{t}{n_j}.
\end{equation}

\n Note that $z_j(t) \to 1$ uniformly in $t$. All the asymptotics below are uniform in $t$ and we will suppress the $t$-dependence from the notation. Using the uniform asymptotics of the Bessel function \eqref{Bessel uniform} we conclude that, as $j\to \infty$

\begin{equation} \label{Bessel asymp case 3}
J_{n_j}^2(n_j z_j) =
\left( \frac{4 \zeta_j}{1-z_j^2} \right)^{\frac{1}{2}} n_j^{-\frac{2}{3}} \left[ \Ai(n_j^{\frac{2}{3}} \zeta_j) \left\{1 + O\left( n_j^{-2} \right) \right\} + n_j^{-\frac{4}{3}} b_0(\zeta_j) \Ai'(n_j^{\frac{2}{3}} \zeta_j) \left\{1 + O\left( n_j^{-2} \right) \right\} \right]^2,
\end{equation}

\n where $\zeta_j = \zeta(z_j)$ and $b_0$ are defined in Appendix~\ref{SECT App alpha beta}. Definition of $\zeta$ shows that $\zeta_j \to 0$, as $z_j \to 1$. The asymptotic relation $\zeta_j \sim \sqrt[3]{2} (1-z_j)$ (cf. \eqref{zeta asymp 1}) implies that

\begin{equation*}
n_j^\frac{2}{3} \zeta_j \sim \sqrt[3]{2} \left( R_j - it n_j^{-\frac{1}{3}} \right). 
\end{equation*}

\n Note that the first factor in \eqref{Bessel asymp case 3} stays bounded, and so does the term $b_0(\zeta_j)$ (cf. \eqref{b_0 at 0}). Moreover, as $\{R_j\}$ is bounded, so is the argument of the Airy function and its derivative in \eqref{Bessel asymp case 3}. Therefore, there exists a constant $C>0$ such that for all $j$ large enough and all $t\in (0, \Im k_j)$ we have 

\begin{equation*}
\left| J_{n_j}^2 (n_jz_j(t)) \right| \leq C n_j^{-\frac{2}{3}},
\end{equation*}

\n which implies the desired formula \eqref{J_2 n_j^1/3 to 0 case 3}.

Let us consider now the integral $\CJ_1$. Note that the variable $z$ is real and $z\in \left(0, \frac{\Re k_j}{n_j} \right)$. We can again use \eqref{Bessel asymp case 3} with $z, \zeta=\zeta(z)$ in place of $z_j, \zeta_j$. Observe that this expansion implies that 

\begin{equation*}
n_j^{\frac{4}{3}} \CJ_1 - \CJ_1^0 \longrightarrow 0,
\end{equation*}

\n where

\begin{equation*}
\CJ_1^0 = n_j^{\frac{2}{3}}  \int_0^{\frac{\Re k_j}{n_j}} f(z) \left( \frac{4 \zeta}{1-z^2} \right)^{\frac{1}{2}}  \Ai^2(n_j^{\frac{2}{3}} \zeta) dz.
\end{equation*}

\n Indeed, once we open up the square in \eqref{Bessel asymp case 3}, multiply the result by $n_j^{\frac{4}{3}}$ and integrate in $z\in \left(0, \frac{\Re k_j}{n_j} \right)$, all the terms converge to zero, apart from the first term, which is precisely the integral $\CJ_1^0$. Further explanation is needed. To establish this convergence, we need to use the dominated convergence theorem. Note that $z\in (0,1^+)$, where $1^+$ denotes a number slightly larger than 1, therefore $\zeta \in (0^-,\infty)$ and $\zeta(z)$ blows up near $z=0$ (cf. \eqref{zeta x}), so that the first factor of \eqref{Bessel asymp case 3} is singular, however the singularity is integrable:

\begin{equation*} 
\zeta(z) \sim \left( \frac{3}{2} \ln \left(\frac{2}{z}\right) - \frac{3}{2}\right)^\frac{2}{3},
\qquad \qquad z \to 0^+.
\end{equation*}

\n Therefore, we can concentrate on the remaining factors, for example the term $n_j^{\frac{2}{3}} \Ai^2(n_j^{\frac{2}{3}} \zeta) O\left( \tfrac{1}{n_j^4} \right)$ approaches to zero uniformly in $z$, as the Airy function is bounded on $\RR$. The only nontrivial term that remains to analyze is

\begin{equation} \label{Ai' term}
\left[ n_j^{-1} b_0(\zeta) \Ai'(n_j^{\frac{2}{3}} \zeta) \right]^2.
\end{equation}

\n The well-known asymptotic relations (\cite{handbook} 10.4.61 and 10.4.62) for the derivative of the Airy function imply that for some $C>0$

\begin{equation*}
|\Ai'(\xi)| \leq C(1+|\xi|^\frac{1}{4}),
\qquad \qquad \xi \in \RR.
\end{equation*}

\n Applying this bound we conclude that \eqref{Ai' term} converges to zero uniformly in $z$, as $b_0(\zeta)$ and $b_0(\zeta) |\zeta|^\frac{1}{4}$ are bounded functions for $z\in(0,1^+)$, i.e. for $\zeta \in (0^-, \infty)$.

So it remains to analyze $\CJ_1^0$ and prove that it has a limit. Let us change the variables $x = n_j^{\frac{2}{3}} \zeta(z)$ in $\CJ_1^0$. Then we have the representation

\begin{equation} \label{J_1^infty change var}
\CJ_1^0 = - \int_{n_j^{\frac{2}{3}} \zeta\left(\frac{\Re k_j}{n_j}\right)}^\infty f \left( \zeta^{-1}(n_j^{-\frac{2}{3}} x) \right) \left[ \frac{4 n_j^{-\frac{2}{3}} x}{1-\left(\zeta^{-1}(n_j^{-\frac{2}{3}} x)\right)^2} \right]^\frac{1}{2} \left( \zeta^{-1} \right)' (n_j^{-\frac{2}{3}} x) \Ai^2(x) dx.
\end{equation}

\n First, note that in view of the asymptotics of $\zeta(z)$ near $z=1$ \eqref{zeta asymp 1} we see that the lower bound in the above integral converges:

\begin{equation*}
n_j^{\frac{2}{3}} \zeta\left(\frac{\Re k_j}{n_j}\right) \sim \sqrt[3]{2} R_j \longrightarrow \sqrt[3]{2} R_\infty, 
\end{equation*}

\n due to our assumption \eqref{R_j to R_infty}. The integrand also converges pointwise: for any fixed $x$, when we let $j\to \infty$, we use that $\zeta^{-1}(0) = 1$ and $(\zeta^{-1})'(0) = 1/ \zeta'(1) = -2^{-\frac{1}{3}}$. Further, the limit of the fraction inside square brackets can be found from the relation

\begin{equation*}
\frac{4 \zeta(z)}{1-z^2} \sim \frac{4\sqrt[3]{2} (1-z)}{1-z^2} \sim 2\sqrt[3]{2},
\qquad \qquad z\to1.
\end{equation*}

\n It remains to show that the dominated convergence can be applied, which follows as the Airy function decays exponentially near infinity, $f$ is bounded, and the product of the second and third factors of the integrand in \eqref{J_1^infty change var} is also bounded. Indeed, the latter statement follows as the function

\begin{equation*}
\left[ \frac{4 \zeta(z)}{1-z^2} \right]^\frac{1}{2} \cdot \frac{1}{\zeta'(z)}
\end{equation*}

\n is bounded for $z \in (0,1^+)$. The limit at $z=1$ has been already discussed, and one can show that the above function has limit $0$ near $z=0$. Thus, we can take limits as $j\to \infty$ in \eqref{J_1^infty change var} and conclude the proof:

\begin{equation*}
\CJ_1^0 \longrightarrow \sqrt[3]{2} f(1) \int_{\sqrt[3]{2} R_\infty}^\infty \Ai^2(x) dx= \CJ_\infty.
\end{equation*}

\vspace{.1in}
\n \textbf{Assume now that} \ $\bm{\{R_j\}}$ \ \textbf{is unbounded}. Upon passing to a subsequence, which we do not relabel, there are two cases to consider:

\begin{equation*}
(i) \ \frac{\Re k_j}{n_j}>1 \quad \text{and} \ R_j \to -\infty,
\qquad \qquad
(ii) \ \frac{\Re k_j}{n_j}<1 \quad \text{and} \ R_j \to +\infty.
\end{equation*}

\n Indeed, note that if there is no subsequence for which $\Re k_j/n_j$ is always less than 1, or always larger than 1, then it must be equal to 1 (eventually), which implies $R_j=0$, i.e. $\{R_j\}$ is bounded and this was already analyzed above.

$(i)$ In this case the decay of $\CB_{n_j}(k_j)$ is slower than $n_j^{-\frac{4}{3}}$. For our purposes, it is enough to prove that there exists a constant $C>0$, such that for large enough $j$

\begin{equation*}
n_j^{-\frac{4}{3}} |\CB_{n_j}(k_j)| \geq C.
\end{equation*}

\n In fact, the above quantity goes to infinity as $j\to \infty$. The same analysis presented above applies and gives

\begin{equation*}
n_j^{\frac{4}{3}} \CJ_1 - \CJ_1^0 \longrightarrow 0,
\qquad \qquad
n_j^{\frac{4}{3}} \CJ_2 \longrightarrow 0,
\end{equation*}

\n which shows that for some $C>0$ and $j$ large enough

\begin{equation*}
n_j^{-\frac{4}{3}} |\CB_{n_j}(k_j)| \geq C \CJ_1^0. 
\end{equation*}

\n For $\CJ_1^0$ the same formula \eqref{J_1^infty change var} holds (note that this integral is positive, as $\zeta$ is a decreasing function). But now the lower limit of integration in \eqref{J_1^infty change var} goes to $-\infty$ as $j \to \infty$. It is trivial to get a lower bound for $\CJ_1^0$: truncate the integral to start, say, from $0$, then take the limit $j\to \infty$ inside the integral 

\begin{equation*}
\liminf_{j \to \infty} \CJ_1^0 \geq \sqrt[3]{2} f(1) \int_0^\infty \Ai^2(x) dx.
\end{equation*}

\vspace{.1in}

$(ii)$ In this case the decay of $\CB_{n_j}(k_j)$ is much faster than $n_j^{-\frac{4}{3}}$. We start from the definition

\begin{equation*}
\CB_{n_j}(k_j) = \int_0^1 f (t) J_{n_j}^2 (n_j z_j(t)) dt,
\qquad \qquad
z_j(t) = \frac{k_j}{n_j}t.
\end{equation*}

\n Let us suppress the $t$-dependence from the notation of $z_j$. Our goal is to use the uniform asymptotics of Bessel function \eqref{Bessel uniform}. Let $\alpha(z)$ be defined as in \eqref{alpha beta} and $\zeta(z)$ be given by \eqref{zeta z<1}. Note that, as $j \to \infty$

\begin{equation} \label{n alpha infty}
n_j \alpha(z_j) \to \infty
\end{equation}

\n uniformly in $t \in (0,1)$. Indeed, we just need to check this when $z_j$ is close to $1$, i.e. when $t$ is close to $1$ as this is the only point where $\alpha(z_j)$ becomes zero. But the relation

\begin{equation*} 
\alpha(z) = \frac{\sqrt[3]{2}}{3} (1-z)^\frac{3}{2} + O\left( (1-z)^\frac{5}{2}\right),
\end{equation*}

\n as $z\to 1$ with $\Re z<1$ shows that near $t=1$, the quantity $n_j |\alpha(z_j)|$ can be bounded from below by a constant multiple of $R_j$, which by assumption goes to infinity. The formula \eqref{n alpha infty} immediately implies that the argument of the Airy function in \eqref{Bessel uniform}: $n_j^{\frac{2}{3}} \zeta(z_j) \to \infty$ uniformly in $t$. In particular, the large argument asymptotics of the Airy function (\cite{handbook} 10.4.60 - 10.4.62) can be used to arrive at the formula

\begin{equation*}
J_{n_j}^2(n_j z_j) \sim \frac{1}{2\pi n_j \sqrt{1-z_j^2}} e^{-2 n_j \alpha(z_j)},
\end{equation*}

\n which again holds uniformly in $t \in (0,1)$. Thus,

\begin{equation*}
\CB_{n_j}(k_j) \sim \frac{1}{2\pi n_j}  \int_0^1 \frac{f(t)}{\sqrt{1-z_j^2}} e^{-2 n_j \alpha(z_j)} dt.
\end{equation*}

\n The rest of the argument of obtaining a lower bound on $\CB_{n_j}(k_j)$ is completely analogous to Section~\ref{SECT case 2}.

\section*{Acknowledgments} 
The author would like to thank F. Cakoni for suggesting the problem under consideration and to M. Vogelius and F. Cakoni for many fruitful conversations.

\appendix

\section{Appendix} \label{SECT appendix}
\setcounter{equation}{0} 
\subsection{Some properties of $\CB_n$} \label{SECT App B_n}

Here we give the proof of Lemma~\ref{LEM B_n properties}, for the case $d=2$. The case $d=3$ is completely analogous, due to the relation \eqref{j_n via J_n}.

\vspace{.05in}

$(i)$ $\CB_n(k)$ given by \eqref{B_n} is an entire function, since so is $J_n^2$ and we can differentiate inside the integral using the dominated convergence theorem. To show that $\CB_n$ has infinitely many zeros we are going to use the Hadamard factorization theorem \cite{conway73}. Let us start by showing that the order of the entire function $\CB_n$ is at most $1$. For any $z \in \mathbb{C}$ and $\nu \geq 0$, we have the bound (\cite{handbook} 9.1.62)

\begin{equation*}
|J_\nu(z)| \leq \frac{|z|^\nu e^{|z|}}{2^\nu \Gamma(\nu + 1)},
\end{equation*}

\n which implies that there exists a constant $c_n>0$ such that for $t \in (0,1)$ and $k \in \mathbb{C}$

\begin{equation*}
|J_n(kt)| \leq c_n |k|^n e^{|k|}.
\end{equation*}

\n Consequently, $|\CB_n(k)| \leq c_n |k|^{2n}  e^{2|k|} \leq e^{|k|^{1+\delta}}$, for any $\delta>0$ provided $|k|$ is large enough. The last estimate implies that the order of $\CB_n$ is at most 1. 

Suppose now that $\CB_n(k)$ has at most finitely many zeros. Let $k_1,...,k_N \in \mathbb{C} \backslash \{0\}$ be its zeros listed counting their multiplicities, then by Hadamard's factorization theorem

\begin{equation} \label{B_n Hadamard}
\CB_n(k) = k^s e^{ak+b} \prod_{j=1}^N \left( 1-\frac{k}{k_j} \right) 
\end{equation}

\n for some $s \in \{0,1,...\}$ and $a,b \in \mathbb{C}$ (if $\CB_n$ has no zeros then the above product is replaced by 1). Let us show that 

\begin{equation} \label{B_n at infinity}
\lim_{\RR \ni k \to \pm \infty} \CB_n(k) = 0.
\end{equation}

\n This will immediately contradict to the representation \eqref{B_n Hadamard}. The large argument asymptotics of the Bessel function (cf. \eqref{J_nu large z} below) implies that $J_n(kt) \to 0$, as $k \to \pm \infty$ along the real axis, for any fixed $t \in (0,1)$. This, along with the estimate $|J_n(x)| \leq 1$ for $x\in \RR$, allows us to apply the dominated convergence in the definition of $\CB_n$ \eqref{B_n} and conclude \eqref{B_n at infinity}.

It remains to show that $\CB_n$ cannot have zeros on the real and imaginary axes excluding the origin. First, $\CB_n(k) \neq 0$ for $k \neq 0$ real, as the integrand in \eqref{B_n} is a nonnegative function. Next, the Poisson representation formula (\cite{handbook} 9.1.20) for Bessel functions implies that $i^{-2\nu} J_\nu^2 \geq 0$ on $i\RR$ for any $\nu \geq 0$, i.e. up to a complex constant, $J_n^2$ is nonnegative on $i\RR$. Hence, $\CB_n(ik) \neq 0$ for $k \neq 0$ real.

\vspace{.05in}
$(ii)$ This is an immediate consequence of the corresponding symmetry of the function $J_n^2$.

\vspace{.05in}
$(iii)$ Using the series representation of the Bessel function it is straightforward to obtain the following asymptotic expansion, as $n \to \infty$:

\begin{equation} \label{J_n large n}
J_n(z) = \frac{(z/2)^n}{n!} \left[ 1 + O\left(n^{-1}\right) \right],
\end{equation}

\n uniformly for $z$ lying in a compact set of $\mathbb{C}$. Thus, $J_n \to 0$ uniformly on compact sets.

\vspace{.05in}
$(iv)$ Let $h_1, h_2 \in \RR$ be any numbers, consider the horizontal strip $H = \{k\in \mathbb{C}: h_1 < \Im k < h_2\}$. Let us show that

\begin{equation} \label{k B_n(k) limit}
\liminf_{\substack{\Re k \to + \infty \\ k \in H}} |k \CB_n(k)| >0.
\end{equation}

\n This will imply that any sequence of distinct zeros of $\CB_n$ cannot lie in any horizontal strip, concluding the proof. By part $(ii)$ the zeros of $\CB_n$ are symmetric about the imaginary axis, so we confined our attention to the right half-plane and in \eqref{k B_n(k) limit} assumed that $\Re k \to + \infty$.

Let us write $k=x + iy$, we will show that $|k \CB_n(k)|$ grows logarithmically, like $\ln x$, so that the quantity in \eqref{k B_n(k) limit} equals to infinity (when $d=3$, it just stays bounded from below by a positive constant). The large argument approximation of Bessel's function (\cite{handbook} 9.2.1) implies that for any $|\arg z|< \pi$

\begin{equation} \label{J_nu large z}
J_\nu(z) = \sqrt{\frac{2}{\pi z}} \left[ \cos\left(z-\tfrac{\pi \nu}{2} - \tfrac{\pi}{4}\right) + e^{|\Im z|} R(z) \right],    
\end{equation}

\n where $R(z)$ denotes the remainder term, whose dependence on $\nu$ is suppressed from the notation. Further, there exist constants $c, M > 0$, depending on $\nu$, such that

\begin{equation} \label{R bound}
|R(z)| \leq \frac{M}{|z|}, \qquad \forall |z|\geq c.
\end{equation}

\n Below, with a slight abuse of notation, we may change the remainder term from one line to another. For example, if in addition $z$ is restricted to a horizontal strip, the factor $e^{|\Im z|}$ can be absorbed into the remainder term. In fact, assume that $z$ lies in a horizontal strip inside the right half-plane:

\begin{equation} \label{H'}
H' = \{z \in \mathbb{C}: h_1'< \Im z < h_2', \ \Re z > 0\}
\end{equation}

\n for some constants $h_1',h_2'$. Squaring the representation \eqref{J_nu large z} and using that the cosine term stays bounded for $z \in H'$, we obtain

\begin{equation} \label{J_n^2 large z}
J_n^2(z) = \frac{2}{\pi z} \left[ \cos^2\left(z-\tfrac{\pi n}{2} - \tfrac{\pi}{4}\right) + R(z) \right]
\qquad \qquad z \in H',
\end{equation}

\n where $R(z)$ satisfies the estimate \eqref{R bound} with the constant $M$ depending on $n$ and $h_1', h_2'$. Assume that $x$ is large enough: $x \geq c$, where $c$ is as in \eqref{R bound}, and let us split the integral \eqref{B_n} defining $\CB_n$ into two parts: 

\begin{equation} \label{B_n two parts}
\CB_n(k) = \int_0^{\frac{c}{x}} f(t) J_n^2(k t) d t + \int_{\frac{c}{x}}^1 f(t) J_n^2(k t) d t.
\end{equation}

\n Henceforth, let $k \in H$. For $t \in (0,1)$ we have $z = kt \in H'$, where $H'$ is given by \eqref{H'} with constants $h_1', h_2'$ depending on $h_1,h_2$. Therefore, using \eqref{J_n^2 large z} in the second integral of \eqref{B_n two parts} along with the half-angle formula $2\cos^2 (\theta) = 1 + \cos(2\theta)$, we obtain

\begin{equation} \label{f_n three parts}
\begin{split}
\pi k \CB_n(k) &= \pi k  \int_0^{\frac{c}{x}} f(t) J_n^2(k t) d t + \int_{\frac{c}{x}}^1 \frac{f(t)}{t} d t + \int_{\frac{c}{x}}^1 \frac{f(t)}{t} \left[ \cos\left(2kt-\pi n - \tfrac{\pi}{2}\right) + R(kt) \right] dt =
\\
&=: I_1 + I_2 + I_3
\end{split}
\end{equation}

\n We next prove that $I_1, I_3$ are bounded and $I_2 \to +\infty$ as $x \to +\infty$, which will conclude the proof of \eqref{k B_n(k) limit}. Recall that $f(t)=t+1$ (cf. \eqref{f}). Clearly, $I_2$ grows logarithmically:

$$I_2 = \ln x + 1 - \frac{c}{x} - \ln c.$$

To bound $I_1$, we first note that for $t$ in the corresponding integration interval

\begin{equation*}
|kt| = t \sqrt{x^2 + y^2} \leq t (x + |y|) \leq c + |y| \leq c + \max\{|h_1|, |h_2|\}, 
\end{equation*}

\n Therefore, $|J_n(kt)|$ can be bounded by a constant depending only on $n, h_1, h_2$. Further, as $f(t)$ is a bounded function we get

$$|I_1| \leq C \frac{|k|}{x} \leq C \left( 1 + \frac{|y|}{x} \right) \leq C',$$

\n for some constants $C, C' >0$ depending on $n, h_1$ and $h_2$.

To bound $I_3$, we use that for $t$ in the corresponding integration interval $|kt| = t \sqrt{x^2 + y^2} \geq t x \geq c$ and hence the error bound \eqref{R bound} can be used with $z = kt$. Namely,

\begin{equation*}
\left| \int_{\frac{c}{x}}^1 \frac{f(t)}{t} R(kt) dt \right| \leq \int_{\frac{c}{x}}^1 \frac{f(t)}{t} \frac{M}{|k| t} dt \leq \frac{C}{|k|} \int_{\frac{c}{x}}^1 \frac{dt}{t^2} \leq \frac{C}{x} \left( \frac{x}{c} - 1  \right) \leq C',
\end{equation*}

\n for some constants $C, C'>0$. It remains to bound the term

\begin{equation*}
\int_{\frac{c}{x}}^1 \frac{f(t)}{t} \cos\left(2kt-\gamma\right) dt, 
\end{equation*}

\n where we set $\gamma = \pi n + \frac{\pi}{2}$ for shorthand. After integrating by parts this integral equals to

\begin{equation*}
\frac{1}{2k} \left[ f(1) \sin(2k - \gamma) - \frac{x}{c} f\left(\frac{c}{x}\right) \sin \left( 2 c \frac{k}{x} - \gamma \right) \right] - \frac{1}{2k} \int_{\frac{c}{x}}^1 \left( \frac{f(t)}{t} \right)' \sin (2kt -\gamma) dt. 
\end{equation*}

\n Using that sine is bounded when its argument lies in a horizontal strip in the complex plane, the modulus of the above quantity can be bounded by a constant multiple of 

\begin{equation*}
\frac{1}{|k|} (1+x) + \frac{1}{|k|} \int_{\frac{c}{x}}^1 \frac{dt}{t^2}. 
\end{equation*}

\n It follows now that the last quantity stays bounded.

\subsection{The expansion of $J_\nu(\nu z)$ and some properties of $\alpha$ and $\beta$} \label{SECT App alpha beta}

Let $\alpha, \beta$ be defined by \eqref{alpha beta}. For real $z$, introduce the functions

\begin{equation} \label{zeta x}
\zeta = \zeta(z) =
\begin{cases}
\left[ \tfrac{3}{2} \alpha(z) \right]^{\frac{2}{3}}, \qquad &0<z\leq 1
\\[.05in]
- \left[ \tfrac{3}{2} \beta(z) \right]^{\frac{2}{3}}, & 1 \leq z < \infty
\end{cases}
\end{equation}

\n and

\begin{equation} \label{b_0}
b_0(\zeta) = 
\begin{cases}
\displaystyle - \frac{5}{48 \zeta^2} + \frac{1}{\zeta^\frac{1}{2}} \left[ \frac{5}{24 (1-z^2)^\frac{3}{2}} - \frac{1}{8(1-z^2)^\frac{1}{2}} \right], \qquad \qquad &0<z<1
\\[.15in]
\displaystyle - \frac{5}{48 \zeta^2} + \frac{1}{(-\zeta)^\frac{1}{2}} \left[ \frac{5}{24 (z^2-1)^\frac{3}{2}} + \frac{1}{8(z^2-1)^\frac{1}{2}} \right], & z>1
\end{cases}
\end{equation}

\n Then, as $\nu \to +\infty$ (\cite{handbook} 9.3.35 - 9.3.42)

\begin{equation} \label{Bessel uniform}
J_\nu(\nu z) = \left( \frac{4 \zeta}{1-z^2} \right)^{\frac{1}{4}} \left[ \nu^{-\frac{1}{3}} \Ai(\nu^{\frac{2}{3}} \zeta) \left\{1 + O\left( \nu^{-2} \right) \right\} + \nu^{-\frac{5}{3}} b_0(\zeta) \Ai'(\nu^{\frac{2}{3}} \zeta) \left\{1 + O\left( \nu^{-2} \right) \right\} \right]
\end{equation}

\n uniformly for $z$ inside the sector $|\arg z| \leq \pi - \delta$, where $\delta>0$ is any small number, $O$'s are uniform in $z$, $\zeta = \zeta(z)$ is the analytic continuation of the function \eqref{zeta x} to the complex plane cut along the negative real axis and $b_0(\zeta)$ is also defined by analytic continuation. We mention that $\zeta$ is analytic near $1$ and has the expansion:

\begin{equation} \label{zeta asymp 1}
\zeta(z) = \sqrt[3]{2} (1-z) + \frac{3 \sqrt[3]{2}}{10} (1-z)^2 + \frac{32\sqrt[3]{2}}{175} (1-z)^3 + ..., \qquad \qquad z \to 1
\end{equation}

\n Further, the coefficient $b_0(\zeta)$ is analytic near $\zeta=0$, i.e. $z=1$. This is not obvious from the above formula of $b_0$. A delicate cancellation happens in the above representation, where the three terms in the expansion \eqref{zeta asymp 1} are used to get 

\begin{equation} \label{b_0 at 0}
b_0(0) = \frac{\sqrt[3]{2}}{70}.
\end{equation}

\n In our applications $z$ lies inside the right half-plane and we consider two cases depending on whether $\Re z$ is larger, or smaller than 1. It is straightforward to see that

\begin{equation} \label{zeta z<1}
\zeta(z) = \left[ \tfrac{3}{2} \alpha(z) \right]^{\frac{2}{3}},
\qquad \qquad z \in V_0 = \{0<\Re z <1\}
\end{equation}

\n and that $b_0(\zeta)$ is given by the first formula of \eqref{b_0}, where $z \in V_0$ is complex now. Similarly,

\begin{equation} \label{zeta z>1}
\zeta(z) = - \left[ \tfrac{3}{2} \beta(z) \right]^{\frac{2}{3}},
\qquad \qquad z \in V_1 = \{\Re z > 1\}
\end{equation}

\n and $b_0(\zeta)$ is given by the second formula of \eqref{b_0}. Below we collect some properties of the functions $\alpha, \beta$ that are used in Sections~\ref{SECT n<k} and \ref{SECT n comparable k}.

\begin{lemma} \label{LEM alpha beta prop} \mbox{}
\begin{enumerate}

\item[(i)] There exists a constant $C>0$ such that

\begin{equation*}
|\Im \beta(z)| \leq C |\Im z|, \qquad \qquad z \in V_1.
\end{equation*}

\item[(ii)] Let $\epsilon \in (0,1)$, then there exist constants $c=c_\epsilon>0$ and $\delta=\delta_\epsilon>0$ such that

\begin{equation} \label{alpha lower real}
\Re \alpha(z) \geq c \left(1+\ln \frac{1}{|z|} \right) , \qquad \qquad z \in V_0, \quad |z|<1-\epsilon \quad \text{and} \quad |\Im z| < \delta.
\end{equation}

\end{enumerate}

\end{lemma}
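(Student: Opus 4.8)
The plan is to handle both parts by reducing the complex estimates to the behavior on the real axis and then integrating the explicit derivatives $\alpha'(z)=-\sqrt{1-z^2}/z$ and $\beta'(z)=\sqrt{z^2-1}/z$ (with the branches inherited from \eqref{alpha beta}) along vertical segments. For part $(i)$: $\beta$ is analytic on the convex region $V_1=\{\Re z>1\}$ and real-valued on $(1,\infty)$ — write $z=\sec\theta$ to get $\beta(z)=\tan\theta-\theta$ — so $\Im\beta(\Re z)=0$ and therefore $\Im\beta(z)=\Im\int_{\Re z}^{z}\beta'(\xi)\,d\xi$, the integral over the vertical segment $\xi=\Re z+it\,\Im z$, $t\in[0,1]$, which stays in $V_1$. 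On that segment $|\xi|\ge\Re\xi=\Re z>1$, hence $|\xi^2-1|\le|\xi|^2+1\le 2|\xi|^2$ and $|\beta'(\xi)|\le\sqrt2$, so integrating gives $|\Im\beta(z)|\le\sqrt2\,|\Im z|$. One checks en route that $\xi^2-1$ is never a negative real on this segment, so the principal square root is the analytic continuation of the one used in \eqref{alpha beta}.

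For part $(ii)$ I would first record the behavior of $\alpha$ on $(0,1)$. Since $\alpha(1)=0$ and $\alpha'(x)=-\sqrt{1-x^2}/x<0$ for $x\in(0,1)$, we have $\alpha(x)>0$ there; moreover by \eqref{alpha tilde} $\alpha(x)=\ln\tfrac2x-1+\widetilde\alpha(x)$ with $\widetilde\alpha(x)\to0$ as $x\to0^+$, so $\alpha(x)/(1+\ln\tfrac1x)$ extends to a continuous, strictly positive function on the compact interval $[0,1-\epsilon]$ (with value $1$ at $0$). Hence there is $c_\epsilon>0$ with $\alpha(x)\ge c_\epsilon\bigl(1+\ln\tfrac1x\bigr)$ for all $x\in(0,1-\epsilon]$. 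Then I would split the complex region. For $|z|<e^{-2}$ with $\Re z>0$, use the representation $\widetilde\alpha(z)=\int_0^z \xi\,d\xi/(1+\sqrt{1-\xi^2})$ together with the elementary bound $|1+\sqrt{1-\xi^2}|\ge1$ for $|\xi|<1$ to get $|\widetilde\alpha(z)|\le|z|^2/2$, whence $\Re\alpha(z)=\ln\tfrac{2}{|z|}-1+\Re\widetilde\alpha(z)\ge\ln\tfrac1{|z|}+\ln2-1-|z|^2/2$; since $\ln\tfrac1{|z|}>2$ in this range, the right-hand side exceeds $\tfrac12\bigl(1+\ln\tfrac1{|z|}\bigr)$, and this sub-estimate needs no smallness of $\Im z$.

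For the remaining range $e^{-2}\le|z|<1-\epsilon$, $\Re z>0$, $|\Im z|<\delta$: once $\delta$ is small (depending on $\epsilon$), $\Re z=\sqrt{|z|^2-(\Im z)^2}$ is bounded below by some $\rho_0=\rho_0(\epsilon)>0$, so $\alpha(\Re z)\ge c_\epsilon\bigl(1+\ln\tfrac1{\Re z}\bigr)\ge c_\epsilon$; writing $\Re\alpha(z)=\alpha(\Re z)+\Re\!\int_{\Re z}^{z}\alpha'(\xi)\,d\xi$ over the vertical segment, which now lies in a compact subset of $V_0$ bounded away from $0$ and from $[1,\infty)$, we have $\bigl|\int_{\Re z}^z\alpha'\bigr|\le M_\epsilon|\Im z|\le M_\epsilon\delta$, so $\Re\alpha(z)\ge c_\epsilon-M_\epsilon\delta\ge c_\epsilon/2$; since $1+\ln\tfrac1{|z|}\le3$ here, this gives $\Re\alpha(z)\ge\tfrac{c_\epsilon}{6}\bigl(1+\ln\tfrac1{|z|}\bigr)$. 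Taking $c=\min\{\tfrac12,\tfrac{c_\epsilon}{6}\}$ and $\delta=\delta_\epsilon$ small enough finishes the proof. The part requiring the most care is the uniformity near $z=0$: the bound must survive as $|z|\to0^+$ while $\Im z$ is only constrained by $|\Im z|<\delta$, and a naive perturbation off the real axis fails there because $\sup|\alpha'|$ blows up like $1/|z|$ — this is precisely why the range $|z|<e^{-2}$ is treated separately via the explicit $\widetilde\alpha$-expansion. The only other recurring (routine) point is verifying, on each integration segment, that the principal branch of $\sqrt{1-\xi^2}$ (resp. $\sqrt{\xi^2-1}$) coincides with the analytic continuation of the branch used to define $\alpha$ (resp. $\beta$).
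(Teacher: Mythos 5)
Your proposal is correct and takes essentially the same approach as the paper: both parts reduce the complex estimate to the value on the real axis plus a vertical-segment correction obtained by integrating the explicit derivatives $\alpha'(z)=-\sqrt{1-z^2}/z$ and $\beta'(z)=\sqrt{z^2-1}/z$, and part $(ii)$ is handled by splitting into a near-zero region (where the $\ln\tfrac1{|z|}$ lower bound holds with no constraint on $\Im z$) and a compact annulus $\{|z|\ge\text{const}\}$ where a constant lower bound suffices. The only cosmetic differences are that you quantify the near-zero estimate via the $\widetilde\alpha$ integral representation with the bound $|1+\sqrt{1-\xi^2}|\ge1$ (rather than citing $\Re\alpha(z)\sim-\ln|z|$ as the paper does) and make the constants explicit (e.g.\ the split at $e^{-2}$, and $|\beta'|\le\sqrt2$ on $V_1$).
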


\begin{proof}
$(i)$ Let $z\in V_1$, simple calculation shows that $\beta'(z) = \sqrt{z^2-1} / z$. Fix a small number $\delta>0$ and write

\begin{equation*}
\beta(z) = \beta(1+\delta) + \int_{1+\delta}^z \frac{\sqrt{\xi^2-1}}{\xi} d\xi.
\end{equation*}

\n Choose the contour of integration to be the horizontal line segment $[1+\delta, x]$ followed by the vertical line segment $x+i\left[0,|y|\right]$, where $z=x+iy$. Letting $\delta \to 0$ in the above formula we arrive at

\begin{equation} \label{beta intrep}
\beta(x+iy) = \int_{1}^x \frac{\sqrt{t^2-1}}{t} dt + i \int_0^{y} \frac{\sqrt{(x+it)^2-1}}{x+it} dt .
\end{equation}

\n Taking imaginary parts, the first integral term drops and in the second one we use that the integrand $\sqrt{\xi^2-1}/\xi$ is a bounded function in $V_1$. This concludes the proof.

$(ii)$ Let $z\in V_0$ and $|z| < 1-\epsilon$. First, note that $\Re \alpha(z) \sim - \ln |z|$ as $z \to 0$, therefore, there exists a constant $c_1>0$ such that

\begin{equation} \label{re alpha small z}
\Re \alpha(z) \geq \frac{1}{2} \left( 1 + \ln \frac{1}{|z|} \right), \qquad \qquad |z|\leq c_1.  
\end{equation}

\n Analogously to the integral representation \eqref{beta intrep} for $\beta$, using $\alpha'(z) = -\sqrt{1-z^2} / z$ we obtain the following for $\alpha$:

\begin{equation*} 
\alpha(x+iy) = \int_x^1 \frac{\sqrt{1-t^2}}{t} dt - i \int_0^{y} \frac{\sqrt{1-(x+it)^2}}{x+it} dt. 
\end{equation*}

\n The first integral above is real and as $x = \Re z <1-\epsilon$ we get

\begin{equation} \label{re alpha int lower bound}
\Re \alpha(x+iy) \geq \int_{1-\epsilon}^1 \frac{\sqrt{1-t^2}}{t} dt + \Im \int_0^{y} \frac{\sqrt{1-(x+it)^2}}{x+it} dt. 
\end{equation}

\n Now let us assume $|z|>c_1$ and $|y| \leq c_1/2$, so that $x \geq c_1/2$. Then

\begin{equation*} 
\left| \Im \int_0^{y} \frac{\sqrt{1-(x+it)^2}}{x+it} dt \right| \leq \int_0^{|y|} \frac{\sqrt{2}}{\sqrt{x^2+t^2}} dt \leq \frac{2\sqrt{2}}{c_1} |y|.
\end{equation*}

\n We can therefore choose $|y|$ so small that the above bound is less than half of the first integral term on the right hand side of \eqref{re alpha int lower bound}. Therefore, there exist positive constants $c=c(\epsilon)$ and $\delta = \delta(c_1, \epsilon)$, such that

\begin{equation} \label{re alpha z>c_1}
\Re \alpha(z) \geq c, \qquad \qquad z\in V_0, \quad c_1<|z|<1-\epsilon, \quad |\Im z|<\delta.
\end{equation}

\n It is now evident that combining the bounds \eqref{re alpha small z} and \eqref{re alpha z>c_1} we can conclude the proof.

\end{proof}

{\footnotesize
\bibliographystyle{plain}
\bibliography{refs}}

\end{document}